\newtheorem{theorem}{Theorem}[section]
\newtheorem{lemma}[theorem]{Lemma}
\newtheorem{corollary}{Corollary}[theorem]
\newtheorem{proposition}{Proposition}[section]
\theoremstyle{definition}
\newtheorem{definition}[theorem]{Definition}
\newtheorem{example}[theorem]{Example}
\newtheorem{Problem}{Problem}
\theoremstyle{remark}
\numberwithin{equation}{section}
\title{Acylindrical hyperbolicity of Subgroups}
\author{ Abhijit Pal}
\address{Indian Institute of technology, Kanpur}
\email{\href{mailto:abhipal$@$iitk.ac.in}{abhipal$@$iitk.ac.in}}
\author{Rahul Pandey}
\address{Indian Institute of technology, Kanpur}
\email{\href{mailto:prahul$@$iitk.ac.in}{prahul$@$iitk.ac.in}}
\keywords{Contracting boundary, Morse boundary, Contracting quasi-geodesic, Acylindrically hyperbolic group}
\subjclass[2010]{20F65, 20F67, 57M07}
\begin{document}
\begin{abstract}
      Suppose  $G$ is a finitely generated group and $H$ is a
subgroup of $G$. Let $\partial_{c}^{\mathcal{F}\mathcal{Q}}G$ denote the contracting boundary of $G$ with
the topology of fellow travelling quasi-geodesics defined by Cashen-Mackay \cite{cashen2017}.
   In this article, we show that if the limit set $\Lambda(H)$ of $H$ in
$\partial_{c}^{\mathcal{F}\mathcal{Q}}G$ is compact and contains at least three points then
the action of the subgroup $H$ on the
space of distinct triples $\Theta_{3}(\Lambda(H))$ is properly discontinuous. By applying a result of B. Sun \cite{BinSun},
if  the limit set $\Lambda(H)$ is compact and the action of $H$ on $\partial_{c}^{\mathcal{F}\mathcal{Q}}G$
is non-elementary  then $H$ becomes an acylindrically hyperbolic group
\end{abstract}
\maketitle
\tableofcontents


\section{Introduction}
The Gromov boundary of a word hyperbolic group is now a well studied object and it plays a crucial role in bordification of finitely generated groups.  Recent efforts have been made
to define a boundary of a geodesic space which is a generalization of the Gromov boundary. A geodeisc ray
$\gamma$ in a metric space is said to be \textit{Morse} if for any $K\geq 1,\epsilon\geq 0$
there exists a constant $N=N(K,\epsilon)$ such that any $(K,\epsilon)$-quasi-geodesic
with end points on $\gamma$ lie in the $N$-neighborhood of $\gamma$.
Cordes in \cite{Morse2015} defined a boundary of a proper geodesic space by taking
all asymptotic Morse geodesic rays starting from a fixed point and it was called  Morse
boundary.
Cordes in \cite{Morse2015} equipped the boundary with direct limit topology
motivated by the contracting boundary of CAT$(0)$ spaces defined
by Charney and Sultan in \cite{Charney15}. Direct limit topology on the Morse
boundary has several drawbacks, in general it is not even first countable
and hence not metrizable. To rectify this situation Cashen-Mackay in
\cite{cashen2017} introduced a new topology on the Morse boundary which was called
topology of fellow travelling quasi-geodesics.  They
worked with contracting quasi-geodesic rays from a fixed base point to define a boundary at infinity and it was called to be
contracting boundary. As a set both Morse boundary and contracting boundary are same
but with two topologies, the topology  of fellow travelling quasi-geodesics is coarser than the direct limit topology.
Cashen-Mackay, in \cite{cashen2017}, showed that  the Morse boundary of a
finitely generated group  with the topology of fellow travelling quasi-geodesics is metrizable.

\par

Osin in \cite{DOsin} introduced the notion of acylindrically hyperbolic groups.
An action of a group $G$ on a metric space $(X,d)$ is said to be
\textit{acylindrical} if for every $\epsilon>0$ there exists $R,N>0$
such that if $d(x,y)>R$ then the set $\{g\in G:d(x,gx)<\epsilon\mbox{ and } d(y,gy)<\epsilon\}$
contains at most $N$  elements.
A group $G$ is called \textit{acylindrically hyperbolic}
if it admits an acylindrical action on a hyperbolic metric space $X$ such that the limit set of $G$ on the Gromov boundary $\partial X$  contains at least three points.
Recently,  Sun in \cite{BinSun} gave a dynamical characterization of
acylindrically hyperbolic groups motivated by the works of Bowditch \cite{Bowditch}, Freden\cite{freden}, Tukia\cite{Tukia}) and Yaman \cite{Yaman}.
 An action of a group $G$ on a compact metrizable space $M$ by
homeomorphism is called a
convergence group action if the induced diagonal action on the space of distinct triples
$$ \Theta_{3}(M)=\{(x_{1},x_{2},x_{3}) \in M^{3}\ |\ x_{1}\neq x_{2},x_{2}\neq
x_{3},x_{1}\neq x_{3}\}$$ is properly discontinuous.
The action of a group $G$ on a metric space $M$ is said to be $elementary$ if it fixes a
subset of $M$ with at most two elements; otherwise the action is called $non$-$elementary$. Sun, in \cite{BinSun}, proved that a group $G$ having non-elementary convergence group action on some compact metrizable space $M$ is
acylindrically hyperbolic (See Corollary 1.3 of \cite{BinSun}).\par

 We denote the contracting boundary of a finitely generated group $G$ with fellow
travelling quasi-geodesics topology by $\partial_{c}^{\mathcal{F}\mathcal{Q}}G$. $\partial_{c}^{\mathcal{F}\mathcal{Q}}G$ is metrizable (Corollary\ 8.6 of
\cite{cashen2017}).
 Let $H$ be a subgroup of $G$ and take the limit set
$\Lambda(H)$ of $H$ in
  $\partial_{c}^{\mathcal{F}\mathcal{Q}}G$.
 Each element of group $G$ acts on  $\partial_{c}^{\mathcal{F}\mathcal{Q}}G$ by
homeomorphism and each element of $H$ leaves $\Lambda(H)$ invariant.
 Thus $H$ acts homeomorphically on $\Lambda(H)$. In this article, we will analyze the  action of the subgroup $H$  on $\Lambda(H)$ when $\Lambda(H)$ is
a compact subset of
 $\partial_{c}^{\mathcal{F}\mathcal{Q}}G$.
 \begin{theorem}\label{main}
Suppose $G$ is a finitely generated group. Let $H$ be a subgroup of $G$
such that the limit set  $\Lambda(H)$ in $\partial_{c}^{\mathcal{F}\mathcal{Q}}G$ contains at
least three elements. If $\Lambda(H)$ is compact subset then the action of the subgroup $H$ on the
space of distinct triples $\Theta_{3}(\Lambda(H))$ is properly discontinuous.
\end{theorem}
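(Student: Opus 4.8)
The plan is to verify that the action of $H$ on the compact metrizable space $\Lambda(H)$ is a convergence action in the dynamical (sequential) sense, and then to invoke the standard equivalence of this with proper discontinuity of the diagonal action on distinct triples (see \cite{Bowditch,Tukia}). Thus it suffices to show: for every sequence $(h_n)$ of pairwise distinct elements of $H$ there are points $a,b\in\Lambda(H)$ and a subsequence $(h_{n_k})$ such that $h_{n_k}\to a$ uniformly on compact subsets of $\Lambda(H)\setminus\{b\}$. Fix a word metric on $G$, a basepoint $o$ in the Cayley graph $X$, and recall that the $G$-action on $X$ is properly discontinuous, so after passing to a subsequence we may assume $d(o,h_no)\to\infty$. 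We may also assume $H$ is infinite, the statement being trivial otherwise, and then $\Lambda(H)$ is infinite.

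The observation that makes the compactness hypothesis usable is this: although a compact subset of $\partial_{c}^{\mathcal{F}\mathcal{Q}}G$ need not be uniformly Morse, every $g\in G$ acts on $X$ by an \emph{isometry}, so the orbit $G\xi$ of a single boundary point consists of points all represented by geodesic rays of one fixed Morse gauge. Hence, after fixing countably many distinct points $p_1,p_2,\dots\in\Lambda(H)$ (dense in $\Lambda(H)$), every point $h_np_i$ and $h_n^{-1}p_i$ carries a uniform Morse gauge, so the usual coarse-hyperbolic tools are available \emph{along these orbits}: two such points are joined by a contracting bi-infinite geodesic of controlled gauge; a triple $x,y,z$ of them has a coarse centre $c(x,y,z)$ lying uniformly close to the three sides $[x,y],[y,z],[x,z]$; this centre is coarsely equivariant under isometries; and the bi-infinite geodesic between a convergent pair of uniform-gauge points with distinct limits converges uniformly on compacta. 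Using compactness of $\Lambda(H)$, pass to a further subsequence so that $h_np_i\to p_i^{*}$ and $h_n^{-1}p_i\to\hat p_i$ in $\Lambda(H)$ for all $i$.

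For a fixed triple $p_i,p_j,p_k$ the centre $c(h_np_i,h_np_j,h_np_k)$ equals $h_n\cdot c(p_i,p_j,p_k)$ up to bounded error, hence leaves every bounded set; since it stays uniformly close to $[h_np_i,h_np_j],[h_np_j,h_np_k],[h_np_i,h_np_k]$, a thin-triangle argument forces two of the rays $[o,h_np_i),[o,h_np_j),[o,h_np_k)$ to fellow travel for distance tending to infinity, so two of $p_i^{*},p_j^{*},p_k^{*}$ coincide. Therefore $i\mapsto p_i^{*}$ takes at most two values, say $a$ and possibly $a'$. If each of $a,a'$ were assumed at least twice one gets a contradiction: picking $p_i,p_j$ with limit $a$ and $p_k,p_\ell$ with limit $a'$, the centres $h_nc(p_i,p_j,p_k)$ and $h_nc(p_i,p_k,p_\ell)$ remain a bounded distance apart (being $h_n$-images of fixed points), while the contracting property together with $[h_np_i,h_np_k]\to[a,a']$ pushes the first far along $[a,a']$ toward $a$ and the second far toward $a'$. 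Hence all but at most one of the $p_i^{*}$ equal $a$; running the two-values argument on triples through the possible exceptional index then shows that \emph{every} $\eta\in\Lambda(H)\setminus\{b\}$ satisfies $h_n\eta\to a$, where $b$ is that exceptional point (or, if there is none, the repelling point produced by the symmetric analysis applied to $(h_n^{-1})$). It remains to upgrade this pointwise convergence to uniform convergence on compact subsets of $\Lambda(H)\setminus\{b\}$; combined with the analogous statement for $(h_n^{-1})$, this finishes the verification.

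The main obstacle I anticipate is precisely that last upgrade — and, more broadly, running the whole argument while $\Lambda(H)$ is only FQ-compact rather than uniformly Morse. Isometry-invariance of Morse gauges lets one translate the \emph{fixed} points $p_i$ and keep uniform control, but uniform convergence on a compact set $K\subseteq\Lambda(H)\setminus\{b\}$ concerns $h_n\eta_n$ for a \emph{varying} sequence $\eta_n\in K$, and there the Morse gauge of $\eta_n$ is not controlled; one must instead exploit that FQ-convergence $\eta_n\to\eta_\infty$ controls the rays $[o,\eta_n)$ near $o$ with constants depending on $\eta_\infty$ (a fixed point) and push this control through the isometries $h_n$. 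Making that, and the various coarse estimates above, precise is where the real work lies; the coarse-geometric skeleton sketched here is the easy part.
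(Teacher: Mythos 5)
Your route differs from the paper's: you aim to verify the sequential ``North--South'' (collapsing) property for the $H$-action on $\Lambda(H)$ and then invoke the Bowditch--Tukia equivalence with proper discontinuity on triples, whereas the paper argues directly on triples. However, as written your proposal has a genuine gap, and it sits exactly where you flag it. For the easy direction of the equivalence that you need (collapsing $\Rightarrow$ properness on $\Theta_3$), pointwise convergence off $b$ is not enough: in a putative failure of properness the triples $(a_n,b_n,c_n)$ \emph{vary}, so you must control $h_{n}\eta_n$ for varying $\eta_n$ in a compact subset of $\Lambda(H)\setminus\{b\}$, i.e.\ you genuinely need locally uniform convergence. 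Your mechanism --- isometry-invariance of the Morse gauge along the orbit of a \emph{fixed} countable dense set $\{p_i\}$ --- cannot produce this, because FQ-compactness of $\Lambda(H)$ does not give uniform Morse gauges, so the gauge of $\eta_n$ (and of the rays $[o,h_n\eta_n)$) is uncontrolled; even the single-subsequence pointwise statement for \emph{all} $\eta\neq b$ does not follow from the diagonal argument over the $p_i$. In addition, several asserted steps (that a sequence of points escaping to infinity along geodesics whose endpoints converge to distinct limits must converge to one of those limits; that ``two of the rays fellow travel'' forces coincidence of limits) are themselves the technical heart of the matter and are not supplied; they correspond to Theorem \ref{Mtheorem}, Corollary \ref{MainLemma} and Lemma \ref{Convergence lemma} in the paper, whose proofs are nontrivial in the FQ topology.

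For contrast, the paper's proof never needs any uniform statement over $\Lambda(H)$: compactness is used only to extract a convergent sequence of triples $(a_n,b_n,c_n)\to(a,b,c)$ and $h_n(a_n,b_n,c_n)\to(a',b',c')$, and from then on all constants come from the contracting functions of the two \emph{fixed} limit ideal triangles, via the Mousley--Russell barycenter lemma together with Theorem \ref{Mtheorem}, Corollary \ref{MainLemma} and Lemma \ref{Convergence lemma}; the punchline is that $h_n$ of the fixed barycenter $B_{(a,b,c)}$ stays bounded, contradicting proper discontinuity of $H$ on the Cayley graph. If you want to salvage your approach, the honest comparison is that proving locally uniform collapsing dynamics on an FQ-compact limit set is at least as hard as (and not obviously reducible to) the direct triple argument, so you would still need to develop analogues of the paper's convergence lemmas before the citation of Bowditch--Tukia does any work.
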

If the the action of the subgroup $H$ on $\partial_{c}^{\mathcal{F}\mathcal{Q}}G$  is non-elementary then the limit set $\Lambda(H)$ of $H$ in $\partial_{c}^{\mathcal{F}\mathcal{Q}}G$ contains at least three points.
 One immediate corollary we can deduce from Theorem \ref{main} by applying Sun's result (Corollary 1.3 of \cite{BinSun})
is the following:
 \begin{corollary}\label{main cor}
Let $H$ be a subgroup  of a finitely generated group $G$ such that
the limit set $\Lambda(H)$  is compact in contracting boundary $\partial_{c}^{\mathcal{F}\mathcal{Q}}G$ of $G$. If the action of the subgroup $H$ on $\partial_{c}^{\mathcal{F}\mathcal{Q}}G$  is non-elementary  then $H$ is an acylindrically hyperbolic group.
 \end{corollary}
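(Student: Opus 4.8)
The plan is to verify that the $H$-action on $\Lambda(H)$ has the \emph{convergence property}: every sequence of distinct elements $h_n\in H$ has a subsequence (not relabelled) for which there are points $\xi_-,\xi_+\in\Lambda(H)$ with $h_n\to\xi_+$ uniformly on compact subsets of $\Lambda(H)\setminus\{\xi_-\}$. This yields proper discontinuity on $\Theta_3(\Lambda(H))$ directly: if it failed there would be a compact $K\subset\Theta_3(\Lambda(H))$ and infinitely many distinct $h_n\in H$ with $h_nK\cap K\neq\emptyset$; choosing $T_n\in K$ with $h_nT_n\in K$ and passing to subsequences, we may assume $T_n\to T=(a,b,c)$ and $h_nT_n\to T'=(a',b',c')$ in $\Theta_3(\Lambda(H))$ and that $(h_n)$ has source--sink data $\xi_\pm$ as above. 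Since $a,b,c$ are pairwise distinct, at most one of them is $\xi_-$, so at least two coordinates of $T$, say the first two, lie in $\Lambda(H)\setminus\{\xi_-\}$; hence those coordinates of $T_n$ eventually lie in a fixed compact subset of $\Lambda(H)\setminus\{\xi_-\}$, so their images under $h_n$ converge to $\xi_+$, forcing $a'=b'=\xi_+$ and contradicting $T'\in\Theta_3(\Lambda(H))$. Thus the entire proof reduces to producing the source--sink data, and Corollary~\ref{main cor} then follows from Sun's result \cite{BinSun}.

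This is a North--South dynamics argument carried out with contracting quasi-geodesics in place of geodesics. Fix the base vertex $o$ of a Cayley graph $X=\mathrm{Cay}(G,S)$, which is proper and geodesic; as the $h_n$ are distinct, $\abs{h_n}_S\to\infty$. Choose three distinct points $\zeta_1,\zeta_2,\zeta_3\in\Lambda(H)$ and representative contracting quasi-geodesic rays $\gamma_1,\gamma_2,\gamma_3$ from $o$; being finitely many they are uniformly contracting, and being contracting and pairwise non-asymptotic they pairwise diverge at a bounded scale. Since left translation by $h_n$ is an isometry of $X$, each $h_n\gamma_i$ is a contracting ray from $h_no$ with the same constants, representing $h_n\zeta_i\in\Lambda(H)$. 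For each $n,i$ let $\alpha_{n,i}$ be the concatenation of a geodesic $[o,h_no]$ with $h_n\gamma_i$; it is asymptotic to $h_n\gamma_i$, so $[\alpha_{n,i}]=h_n\zeta_i$, and its initial subpath of length $\abs{h_n}_S$ is $[o,h_no]$. A combination lemma for contracting quasi-geodesics (of the type underlying the stability theory in \cite{cashen2017}) provides, having fixed the $\zeta_i$, a threshold $B$ — which we may take larger than their divergence scale — such that $\alpha_{n,i}$ is a uniformly contracting quasi-geodesic ray whenever $h_n\gamma_i$ backtracks at the junction $h_no$ by less than $B$; call $i$ \emph{bad for $n$} otherwise. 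Backtracking by at least $B$ forces $\gamma_i$ to fellow-travel a geodesic $[o,h_n^{-1}o]$ past distance $B$, so at most one index is bad for each $n$; hence some fixed $i^\ast$ is good for infinitely many $n$, and along that subsequence the $\alpha_{n,i^\ast}$ are uniformly contracting quasi-geodesic rays from $o$ whose initial subpaths of length $\to\infty$ equal $[o,h_no]$. By Arzel\`a--Ascoli (properness of $X$) and the stability of uniform contraction under locally uniform limits, a further subsequence of $(\alpha_{n,i^\ast})$ converges locally uniformly to a contracting quasi-geodesic ray $\gamma_+$; put $\xi_+:=[\gamma_+]$. Then $[o,h_no]$ converges to $\gamma_+$ in the same sense, and since the topology of fellow-travelling quasi-geodesics detects convergence exactly through such locally uniform convergence of uniformly contracting representatives \cite{cashen2017}, $h_n\zeta_{i^\ast}=[\alpha_{n,i^\ast}]\to\xi_+$; as $\Lambda(H)$ is closed, $\xi_+\in\Lambda(H)$.

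To upgrade this to locally uniform convergence, I would first run the same construction for $(h_n^{-1})$ to obtain $\xi_-\in\Lambda(H)$ with the symmetric behaviour, and then show $h_n\to\xi_+$ uniformly on any compact $C\subset\Lambda(H)\setminus\{\xi_-\}$. \emph{This is the point at which compactness of $\Lambda(H)$ enters}: a compact subset of $\partial_{c}^{\mathcal{F}\mathcal{Q}}G$ admits representative rays with one common contraction function — a feature of the topology of fellow-travelling quasi-geodesics \cite{cashen2017} — so for $\zeta\in C$ the threshold in the combination lemma is uniform, while the hypothesis that $C$ avoids a neighbourhood of $\xi_-$ ensures, via the identities $h_n^{-1}(h_n\zeta)=\zeta$ and the symmetric behaviour of $(h_n^{-1})$, that the backtracking bound holds uniformly on $C$ once $n$ is large. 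Then $[o,h_no]\cdot h_n\gamma_\zeta$ fellow-travels $[o,h_no]$ on a prefix of length tending to infinity, uniformly in $\zeta\in C$, whence $h_n\zeta\to\xi_+$ uniformly on $C$. This establishes the convergence property and, by the first paragraph, Theorem~\ref{main}.

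The step I expect to be the main obstacle is the combination lemma in the required generality — that concatenating a geodesic with a uniformly contracting quasi-geodesic ray, under a bounded-backtracking hypothesis at the junction, yields a \emph{uniformly} contracting quasi-geodesic — together with extracting genuinely \emph{uniform} (not merely pointwise) convergence of the $h_n$ on compact subsets of $\Lambda(H)$. This is exactly where the compactness of $\Lambda(H)$ and the fine features of Cashen--Mackay's topology (metrizability, the behaviour of convergent sequences of contracting quasi-geodesics, uniform contraction on compacta) must be invoked with care; the pigeonhole ``at most one bad direction'' step and the $\Theta_3$ bookkeeping of the first paragraph are then routine.
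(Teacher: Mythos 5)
Your overall reduction is fine: a source--sink (North--South) property for sequences in $H$ does imply proper discontinuity on $\Theta_3(\Lambda(H))$, and Sun's theorem then applies. But this is a genuinely different, and more ambitious, route than the paper's, and it has a real gap at exactly the point you flag as ``the main obstacle.'' The pivotal assertion in your third paragraph --- that a compact subset of $\partial_{c}^{\mathcal{F}\mathcal{Q}}G$ ``admits representative rays with one common contraction function,'' cited as ``a feature of the topology of fellow-travelling quasi-geodesics'' --- is not a known feature of that topology and cannot be quoted from \cite{cashen2017}. Uniform contraction over a compact set is precisely the hard content even in the special case where the compact set is the \emph{entire} boundary of a group: that is the difficult part of Cashen--Mackay's Theorem~10.1, and this paper explicitly remarks that their proof of uniform contraction is ``hard and complicated'' and works to avoid reproving it. For a general compact limit set $\Lambda(H)$ no such statement is available, and without it your combination lemma has no uniform threshold over $\zeta\in C$, so the upgrade from pointwise convergence $h_n\zeta_{i^\ast}\to\xi_+$ to uniform convergence on compact subsets of $\Lambda(H)\setminus\{\xi_-\}$ does not go through. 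The remaining step of that upgrade (deducing a uniform backtracking bound on $C$ from the behaviour of $(h_n^{-1})$ via $h_n^{-1}(h_n\zeta)=\zeta$) is also only gestured at, and it is the dynamical heart of a North--South argument rather than a routine verification; as written, the proof of the convergence property is incomplete.

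By contrast, the paper never establishes source--sink dynamics. It argues by contradiction directly on triples: if $h_nK\cap K\neq\emptyset$ for distinct $h_n$, it extracts convergent triples $(a_n,b_n,c_n)\to(a,b,c)$ and $(a_n',b_n',c_n')\to(a',b',c')$, takes ideal triangles over the two limit triples with $\delta$-approximate barycenters (Mousley--Russell), uses the stability of geodesics with converging endpoints (Theorem~\ref{Mtheorem}, Corollary~\ref{MainLemma}) to make these approximate barycenters of the approximating triangles $\triangle_n$, and then uses Lemma~\ref{Convergence lemma} to show that if $h_n(B_{(a,b,c)})$ were unbounded, the translated near-barycenter points would force two of $a',b',c'$ to coincide. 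This only requires contraction functions for the finitely many limiting geodesics involved, never uniform contraction over $\Lambda(H)$; compactness enters only through sequential compactness of $K\subseteq\Theta_3(\Lambda(H))$. If you want to salvage your approach, you would need to either prove uniform contraction on compact subsets of $\partial_{c}^{\mathcal{F}\mathcal{Q}}G$ (a substantial result in its own right, not presently in the literature you cite) or restructure the argument, as the paper does, so that only finitely many contracting rays ever need to be compared at once.
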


 The action of a word hyperbolic group $G$ on its Gromov boundary $\partial G$ is convergence and hence any subgroup of a word hyperbolic group has convergence action on $\partial G$. A quasiconvex subgroup $H$ of a word hyperbolic group $G$ is itself word hyperbolic and the limit set $\Lambda(H)$, being homeomorphic to $\partial H$, is  compact. An example of a non-hyperbolic subgroup of a word hyperbolic group with limit set compact can be obtained from Rips construction in \cite{rips}.
 \begin{example} \label{trivial 1}
 Rips in \cite{rips} showed that given any infinite finitely presented group $Q$, there exists a short exact sequence
       $$1 \rightarrow N\rightarrow G \rightarrow Q \rightarrow 1,$$
       such that $G$ is word hyperbolic group and $N$ is finitely generated non hyperbolic group.
       The limit set $\Lambda(N)$, being closed, is compact. In fact, as $N$ is a normal subgroup of $G$, $\Lambda(N)= \partial G$.  \\
\end{example}
  \begin{example}\label{trivial 2}
  In the Example \ref{trivial 1}, the subgroup $N$ of the word hyperbolic group $G$ is finitely generated.
       In \cite{DasMj}, Das and Mj. constructed an infinitely generated malnormal subgroup $K$ of a free group  $F_{2}$ of rank $2$,
       whose limit set $\Lambda(K)$ is whole $\partial F_{2}$. See section 3 of \cite{DasMj} for a detailed discussion about this example. The limit set  $\Lambda(K)$ is compact.
  \end{example}
  By the work of Olshanski, Osin \& Sapir in \cite{lac}, there exists
     a   finitely generated group $G$ (Tarski Monster Group) which is  not virtually cyclic, every proper subgroup of $G$ is cyclic, Morse
     and hence $\partial_{c}^{\mathcal{F}\mathcal{Q}}G\neq\phi$ (Theorem 1.12 of  \cite{lac}). The group $G$ is not acylindrcally hyperbolic but  all infinite subgroups of $G$, being cyclic, are elementary.
 \begin{example}\label{trivial 3} Let $A$ be a Tarski Monster Group in which every proper subgroup is infinite cyclic  and $\partial_{c}^{\mathcal{F}\mathcal{Q}}A\neq\phi$ (See Theorem 1.12 of  \cite{lac}). Let $M$ be a proper infinite subgroup of $A$. Then $M$ is cyclic and $\Lambda(M)$ consists of two points.
   By Rips construction in \cite{rips} given any infinite finitely presented group $Q$, there exists a short exact sequence
       $1 \rightarrow N\rightarrow B \rightarrow Q \rightarrow 1$
       such that $B$ is a word hyperbolic group and $N$ is a finitely generated non-hyperbolic group.
       The limit set $\Lambda(N)$ is compact.  Consider the group $G=A*B$ and the subgroup $H=M*N$ of $G$.
       The subgroup $H$ is contained in the hyperbolic group $M*B$ and the subgroup $M*B$ is Morse in $G$.
       Hence, the limit set $\Lambda(H)$ is compact in the contracting boundary $\partial_{c}^{\mathcal{F}\mathcal{Q}}G$ of $G$.
       The group $G$ is acylindrically hyperbolic but not a hyperbolic group.
        \end{example}
  In the Examples \ref{trivial 1}, \ref{trivial 2} \& \ref{trivial 3}, the subgroups are non-elementary and are contained in a hyperbolic group. The limit sets  of these subgroups, being contained in the Gromov boundary of a hyperbolic group, are compact. It is a trivial fact that non-elementary subgroups of a hyperbolic group are acylindrically hyperbolic and one does not need Corollary \ref{main cor} to prove it.
  Thus, the above examples are not new and not much interesting. In \cite{cashen2017}, Cashen-Mackay proved that a finitely generated group $G$ has compact contracting boundary $\partial_{c}^{\mathcal{F}\mathcal{Q}}G$ if and only if the group $G$ is hyperbolic. By using
  the ideas of proof in Theorem \ref{main},   we will give another proof of the fact that a group $G$ is hyperbolic  if
  its contracting boundary  $\partial_{c}^{\mathcal{F}\mathcal{Q}}G$  is compact (See Theorem \ref{hyp theorem}).
    If $H$ is a Morse subgroup
  of $G$ then $\Lambda(H)$ is homeomorphic to $\partial_{c}^{\mathcal{F}\mathcal{Q}}H$. Thus, for a Morse subgroup $H$ of a finitely generated group, the limit set $\Lambda(H)$ is compact if and only if $H$ is hyperbolic.
  To find examples which would make Theorem \ref{main} interesting, one should look for non-Morse subgroups in
  non-hyperbolic groups or non-acylindrically hyperbolic groups with non-trivial Morse boundary.
  \begin{Problem} \label{prob 1}Find a finitely generated group $G$ and a subgroup $H$ of $G$ such that the followings hold: \\
  (i) $G$ is not acylindrically hyperbolic and $\partial_{c}^{\mathcal{F}\mathcal{Q}}G\neq \phi$,\\
  (ii) the limit set $\Lambda(H)$ is compact in $\partial_{c}^{\mathcal{F}\mathcal{Q}}G$ and contains at least three points,\\
  (iii) $H$ is not contained in any hyperbolic subgroup of $G$.
  \end{Problem}

  By the work of Dahmani-Guiradel-Osin \cite{dah-gui-osin} and Sisto \cite{sisto}, it is known that an acylindrically hyperbolic
  group always contain non-elementary hyperbolic Morse subgroups and hence contracting boundary of an acylindrically hyperbolic
  group is always non-empty. Let $G$ be a finitely generated group and $H$ be a subgroup of $G$. If the limit set $\Lambda(H)$
  of $H$ in $\partial_{c}^{\mathcal{F}\mathcal{Q}}G$ is a compact set and the action of the subgroup $H$ on $\partial_{c}^{\mathcal{F}\mathcal{Q}}G$  is non-elementary then, by Corollary \ref{main cor}, the subgroup $H$
  is acylindrically hyperbolic and hence $\partial_{c}^{\mathcal{F}\mathcal{Q}}H\neq \phi$.
  \begin{Problem}\label {prob 2} Does there exists a finitely generated group $G$ and a subgroup $H$ of $G$ such that $\partial_{c}^{\mathcal{F}\mathcal{Q}}H=\phi$ but the limit set $\Lambda(H)$ is a non-empty subset of $\partial_{c}^{\mathcal{F}\mathcal{Q}}G$ ?
  \end{Problem}
  The groups $\mathbb Z\oplus\mathbb Z$ and Baumslag-Solitar group $BS(1,2)$ have empty contracting boundaries. In \cite{drutu},
  Drutu and Sapir proved that if no asymptotic cone of a group contains a cut point then that group has an empty contracting boundary.
  Consider the standard embedding $\mathbb Z\oplus\mathbb Z\hookrightarrow PSL(2,\mathbb Z[i])$ defined on generators as
  $$(1,0)\mapsto \begin{pmatrix}1 & 1\\ 0&1\end{pmatrix}, (0,1)\mapsto \begin{pmatrix}1 & i\\ 0&1\end{pmatrix}.
  $$
The group $ PSL(2,\mathbb Z[i])$ is a discrete subgroup of $PSL(2,\mathbb C)$ which is orientation preserving isometry group
of hyperbolic 3-space $\mathbb H^3$. The limit set of $\mathbb Z\oplus\mathbb Z$ in the boundary at infinity $\partial \mathbb H^3$
of $\mathbb H^3$ is the singleton set $\{\infty\}$. The contracting boundary $\partial_{c}^{\mathcal{F}\mathcal{Q}}(PSL(2,\mathbb Z[i]))$
 is non-empty but the limit set $\Lambda(\mathbb Z\oplus\mathbb Z)$ in $\partial_{c}^{\mathcal{F}\mathcal{Q}}(PSL(2,\mathbb Z[i]))$
is empty. \par
Let $G$ be a group hyperbolic relative to $H$ (in the sense of Bowditch). The group $G$ acts properly discontinuously on a proper hyperbolic metric space $X_G$ and $G$ has convergence \& geometrically finite action on the Gromov boundary $\partial X_G$. The boundary $\partial X_G$ of $X_G$ is called the Bowditch boundary or relatively hyperbolic boundary of $G$ and is denoted by $\partial_{rel}G$. Consider the group $G=\mathbb Z*(\mathbb Z\oplus\mathbb Z)$ and the subgroup $H=\mathbb Z\oplus\mathbb Z$ of $G$.
The group $G$ is hyperbolic relative to $H$. Here, the limit set $\Lambda_{rel}(H)$ of $H$ in $\partial_{rel}(G)$ is a singleton set
whereas $\partial_{c}^{\mathcal{F}\mathcal{Q}}H=\phi$ and the limit set $\Lambda(H)=\phi$ in $\partial_{c}^{\mathcal{F}\mathcal{Q}}G$.
For a group $G$ hyperbolic relative to $H$ with $\partial_{c}^{\mathcal{F}\mathcal{Q}}H=\phi$,
Cashen-Mackay \cite{cashen2017} proved that $\partial_{c}^{\mathcal{F}\mathcal{Q}}G$ is embedded in the set of all conical limit points for the action of $G$ on $\partial_{rel}G$.

Let $G$ be a finitely generated group and $H$ be a subgroup of it. To understand the relationship between $\partial_{c}^{\mathcal{F}\mathcal{Q}}H$ and the limit set $\Lambda(H)$, it is worth to study whether there exists a continuous extension
$\partial_c i:\partial_{c}^{\mathcal{F}\mathcal{Q}}H\to \partial_{c}^{\mathcal{F}\mathcal{Q}}G$ of the inclusion $i:H\hookrightarrow G$. Such continuous
extensions are called Cannon-Thurston maps, the name  derived from the work of J.Cannon and W.Thurston in \cite{Can-Thu}. The Cannon-Thurston maps were extensively studied by Mahan Mj., we refer the reader to the introduction section of the article \cite{Mj Ann}  by Mj. for a detailed discussion of results and problems oriented with Cannon-Thurston maps.

\begin{example} (i) Let $\Sigma$  be a closed orientable surface of genus $2$ and $f:\Sigma\to\Sigma$ be a pseudo-Anosov homeomorphism.
Consider the mapping torus $M_f$$=$$\frac{\Sigma\times [0,1]}{\{(x,0)\sim(f(x),1): ~x\in\Sigma\}}$. The fundamental groups $\pi_1(\Sigma)$
and $\pi_1(M_f)$ are hyperbolic groups. The group $\pi_1(M_f)$ is the HNN extension $<\pi_1(\Sigma),t~| ~tat^{-1}=f_*(a)~\mbox{for all }a\in\pi_1(\Sigma)>$, where $f_{*}$ is the isomorphism on $\pi_1(\Sigma)$ induced from $f$.  By the work of Cannon and Thurston, in \cite{Can-Thu}, it follows that
there exists a Cannon-Thurston map for the embedding $\pi_1(\Sigma)\hookrightarrow \pi_1(M_f)$ (See Section 4 of \cite{Can-Thu}, see also Theorem 4.3 of \cite{Mitra} for a more general statement).
Now consider the group
Let $H=<\pi_1(\Sigma),x,y~ | ~xy=yx>$ and $G=<\pi_1(M_f),x,y~|~xy=yx>$. Both the groups $H$ and $G$ are hyperbolic relative to $\mathbb Z\oplus\mathbb Z$. By Theorem 2.10 of \cite{abhijit}, there exists a continuous extension $\partial_{rel}i:\partial_{rel}H\to\partial_{rel}G$
of the inclusion $i:H\hookrightarrow G$. The contracting boundary $\partial_{c}^{\mathcal{F}\mathcal{Q}}H$ of $H$ is embedded in $\partial_{rel}H$
and  the contracting boundary $\partial_{c}^{\mathcal{F}\mathcal{Q}}G$ of $G$ is embedded in $\partial_{rel}G$ (See Theorem 7.6 of \cite{cashen2017}). In this example, $\partial_{rel} i(\partial_{c}^{\mathcal{F}\mathcal{Q}}H)$ is contained in $\partial_{c}^{\mathcal{F}\mathcal{Q}}G$.
To see this fact, let us take
a contracting geodesic ray $\alpha:[0,\infty)\to H$ in $H$ with $\alpha(0)=e$, where $e$ is the identity element of $H$. The ray $\alpha$ being contracting implies that the intersections
of $\alpha$ with the left cosets of $\mathbb Z\oplus\mathbb Z$ are uniformly bounded. Thus there exists a number $D_{\alpha}\geq 0$ and
disjoint subintervals $[s_n,t_n]$ with $t_n<s_{n+1}$ such that $\alpha([s_n,t_n])$ lie in a left coset of $\mathbb Z\oplus\mathbb Z$, $\alpha([t_n,s_{n+1}]))$ lie in a left coset of $\pi_1(\Sigma)$ in $H$ and length of every $\alpha|_{[s_n,t_n]}$ is at most $D_{\alpha}$.
Let $u_n\in (t_n,s_{n+1})$, the sequence $\{\alpha(u_n)\}$ converges to $\alpha(\infty)$ in $H\cup \partial_{c}^{\mathcal{F}\mathcal{Q}}H$. As Cannon-Thurston map exists,
the sequence $\{i(\alpha(u_n))\}$ converges to $\partial_c i(\alpha(\infty))$ in $G\cup\partial_{rel}G$. Let $\beta_n$ be a geodesic in $G$
joining $e$ and $i(\alpha(u_n))$. Then $\beta_n$ is of the form
$[e,\alpha(s_1)]\cup\alpha([s_1,t_1])\cup [\alpha(t_1),\alpha(s_2)]\cup...\cup[\alpha(t_n),\alpha(u_n)]$, where $[\alpha(t_{j}),\alpha(s_{j+1})]$
and $[\alpha(t_n),\alpha(u_n)]$ are geodesics in left cosets of $\pi_1(M_f)$. The geodesics in left cosets of $\pi_1(M_f)$ are uniformly contracting as
$\pi_1(M_f)$ is a hyperbolic group. Note that any quasi-geodesic in $G$ joining $e$ and $i(\alpha(u_n))$ passes through points $\alpha(s_j),\alpha(t_j)$. As length of every $\alpha|_{[s_n,t_n]}$ is at most $D_{\alpha}$ and each geodesics in left cosets of $\pi_1(M_f)$ are uniformly contracting the geodesic $\beta_n$ is uniformly contracting for every $n$. The sequence $\{\beta_n\}$ of geodesics converge to the
uniformly contracting geodesic $\beta:[0,\infty)\to G$ where $\beta([0,\infty))=[e,\alpha(s_1)]\cup\alpha([s_1,t_1])\cup [\alpha(t_1),\alpha(s_2)]\cup...$. Thus, $\partial_{rel} i(\alpha(\infty))=\beta(\infty)\in \partial_{c}^{\mathcal{F}\mathcal{Q}}G$. For this example,
there exists a continuous extension  $\partial_c i:\partial_{c}^{\mathcal{F}\mathcal{Q}}H\to \partial_{c}^{\mathcal{F}\mathcal{Q}}G$ of  $i:H\hookrightarrow G$ and  the restriction of the map $\partial_{rel} i$ on $\partial_{c}^{\mathcal{F}\mathcal{Q}}H$ is same as  $\partial_c i$.
Note that for the inclusion
$j:\pi_1(\Sigma)\hookrightarrow G$, due to the existence of its Cannon-Thurston map,
$\partial_c j(\pi_1(\Sigma))=\Lambda(\pi_1(\Sigma))$. Hence, the limit set $\Lambda(\pi_1(\Sigma)) $
is compact in $ \partial_{c}^{\mathcal{F}\mathcal{Q}}G$. \\ \\
(ii) Consider the group
$G=<\pi_1(\Sigma),x,y,t~ | ~xy=yx,tx=xt,ty=yt,tat^{-1}=f_*(a)~\forall ~a\in\pi_1(\Sigma)>.$
The group $H$ is hyperbolic relative to $\mathbb Z\oplus\mathbb Z$ and the group $G$ is hyperbolic relative to $\mathbb Z\oplus\mathbb Z\oplus\mathbb Z$ (See Figure 1 of \cite{beeker}). By Theorem 2.10 of \cite{abhijit}, there exists a continuous extension $\partial_{rel}i:\partial_{rel}H\to\partial_{rel}G$
of the inclusion $i:H\hookrightarrow G$. The inclusion $i$ preserves parabolic limit points. Hence, $\partial_{rel} i(\partial_{c}^{\mathcal{F}\mathcal{Q}}H)$ is contained in the set of conical limit points for the action of $G$
on $\partial_{rel}G$. But here $\partial_{rel} i(\partial_{c}^{\mathcal{F}\mathcal{Q}}H)$ may not be contained in $ \partial_{c}^{\mathcal{F}\mathcal{Q}}G$, as for a geodesic $\gamma$ in $G$ connecting two points of a contracting geodesic ray in $H$, the intersection of $\gamma$ with left cosets of  $\mathbb Z\oplus\mathbb Z\oplus\mathbb Z$ may not be uniformly bounded. The groups $H$ and $G$ here are also $CAT(0)$ groups with isolated flats. By changing the relatively hyperbolic boundary with  visual boundary of $CAT(0)$ spaces on which the groups act, Beeker, Cordes, Gardam, Gupta, Stark in \cite{beeker} showed that a Cannon-Thurston map does not exist for the pair $(G,H)$ (See Theorem 5.6 of \cite{beeker} for a more general statement).
\end{example}

\begin{Problem}
Find an example of a finitely generated group $G$ and a non-Morse subgroup $H$ of $G$ such that $G$ is neither hyperbolic nor relatively hyperbolic and  a Cannon-Thurston map  $\partial_c i:\partial_{c}^{\mathcal{F}\mathcal{Q}}H\to \partial_{c}^{\mathcal{F}\mathcal{Q}}G$ exists for the inclusion $i:H\hookrightarrow G$.
\end{Problem}
\noindent\textit{Acknowledgements:} We thank the anonymous referee for his/her valuable comments and suggestions which has helped in improving the exposition of this article from an earlier draft. Research of the first author was supported by DST-INSPIRE Grant IFA12-MA-19.

\section{Morse \& Contracting Quasi-geodesics}
\begin{definition}
\begin{enumerate}
\item(Quasi-isometry): Let $K\geq 1,\epsilon\geq 0$. Let $(X,d_{X})$ and $(Y,d_{Y})$ be two metric spaces.
 A map $f: X \rightarrow Y$ is said to be a $(K,\epsilon)$-$quasi$-$isometric$ embedding if
$$\frac{1}{K}d_{X}(a,b)-\epsilon\leq d_{Y}(f(a),f(b))\leq Kd_{X}(a,b)+\epsilon$$ for all $a,b\in X.$
In addition, if for each $y\in Y$ there exists $x\in X$ such that $d_Y(y,f(x))\leq K$ then $f$
is said to be a $(K,\epsilon)$-$quasi$-$isometry$ between $X$ and $Y$.
\item (Quasi-geodesic): Let $X$ be metric space. A map $c: I \rightarrow X$,
where $I$ is any interval in $\mathbb{R}$ with usual metric, is
said to be $(K,\epsilon)$-$quasi$-$geodesic$ if c is a $(K,\epsilon)$-quasi-isometric embedding.
\end{enumerate}
\end{definition}

\begin{definition}
(Morse quasi-geodesic): A quasi-geodesic $\gamma$ in a geodesic metric space X is
called $N$-$Morse$ if
there exists a function $N:\mathbb{R}_{\geq 1} \times \mathbb{R}_{\geq 0} \rightarrow
\mathbb{R}_{\geq 0}$ such that if $q$ is
any $(K,\epsilon)$-quasi-geodesic with endpoints on $\gamma$ then
$q$ lies in the closed $N(K,\epsilon)$-neighborhood of $\gamma$.
We call $N$ to be the $Morse\ Gauge$ of $\gamma$.
\end{definition}

\noindent {A function $\rho$ is called \textit{sublinear} if it is non-decreasing, eventually
non-negative, and $lim_{r \rightarrow \infty} \rho(r)/r = 0$.}
\begin{definition}
(Contracting quasi-geodesic):
Let $\gamma:I\rightarrow X$ be a quasi-geodesic in a geodesic metric space $X$.
Let $\pi_{\gamma} : X \rightarrow \mathbb P({\gamma(I)})$ be defined as
$\pi_{\gamma}(x)=\{z \in \gamma \ |d(x, z) = d(x, \gamma(I))\}$, where $\mathbb P({\gamma(I)})$ denotes the power set of $\gamma(I)$.
The map $\pi_{\gamma}$ is called to be the closest point
projection to $\gamma(I)$.  For a sublinear function $\rho$, we say that $\gamma$
is $\rho$-$contracting$ if for all $x$ and $y$ in X:
 $$ d(x,y)\leq d(x,\gamma(I))\implies
diam(\pi_{\gamma(I)}(x)\cup\pi_{\gamma(I)}(y))\leq \rho(d(x,\gamma(I))) .$$
 We say that a quasi-geodesic $\gamma$ is contracting if it is $\rho$-$contracting$ for
some sublinear function $\rho$.
\end{definition}
\textbf{Note:} In the above two definitions one can take any subset $Z$ of $X$
instead of quasi-geodesics and then we have the definitions of Morse and contracting subsets.
\begin{theorem}  (Theorem 1.4 of \cite{arzhan}) \label{Cont} Let $Z$ be a subset of a geodesic
metric space $X$. The followings are equivalent:
\begin{enumerate}
\item  $Z$ is Morse.
\item $Z$ is contracting.
\end{enumerate}
\end{theorem}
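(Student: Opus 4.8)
The plan is to prove the two implications of Theorem \ref{Cont} separately; in both directions the real content is producing constants that are \emph{uniform}, i.e.\ that depend only on the relevant gauges.

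I would first treat \textbf{contracting $\Rightarrow$ Morse}, which I expect to be the more routine direction. Assume $Z$ is $\rho$-contracting for a sublinear $\rho$, fix a $(K,\epsilon)$-quasi-geodesic $q\colon[0,L]\to X$ with endpoints on $Z$, and set $D=\max_t d(q(t),Z)$, attained at some $t_0$; the goal is an a priori bound $D\le N(K,\epsilon)$. The engine is a coarse Lipschitz estimate for the closest-point projection $\pi_Z$ extracted from the hypothesis: by definition $d(x,y)\le d(x,Z)$ forces $\operatorname{diam}(\pi_Z(x)\cup\pi_Z(y))\le\rho(d(x,Z))$, and chaining this along a geodesic from $x$ to $y$ (subdividing into consecutive steps whose lengths are comparable to the current distance to $Z$) controls $\operatorname{diam}(\pi_Z(x)\cup\pi_Z(y))$ in terms of $d(x,y)$ and of $\rho$ evaluated at the distances to $Z$ along the way. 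I would then subdivide $q$ greedily into pieces on which the projection moves by at most $\rho(D)$, bound the number of pieces against the length $L$ (which $K,\epsilon$ control in terms of $d(q(0),q(L))$), and compare the accumulated projected displacement with $d(q(0),q(L))$. This produces a self-referential inequality in which $D$ appears both as the excursion depth and, through $\rho(D)$, as the size of each projection jump. The first genuine obstacle is closing this estimate: unlike the strongly contracting case, where $\rho$ is bounded and $\pi_Z$ is coarsely Lipschitz with an additive constant, here every bound degrades sublinearly with $D$, and one must invoke $\rho(D)/D\to0$ to argue that the inequality fails once $D$ exceeds a threshold depending only on $K$, $\epsilon$, and $\rho$.

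For the converse, \textbf{Morse $\Rightarrow$ contracting}, which I expect to be the harder direction, I would argue by contradiction. If $Z$ is $N$-Morse but is not $\rho$-contracting for any sublinear $\rho$, then the negation of the contraction condition yields points $x_n$ with $r_n:=d(x_n,Z)\to\infty$ and points $y_n$ with $d(x_n,y_n)\le r_n$ whose projections stay a definite fraction apart, $d(\pi_Z(x_n),\pi_Z(y_n))\ge c\,r_n$ for a fixed $c>0$. Writing $z_n=\pi_Z(x_n)$ and $w_n=\pi_Z(y_n)$, I would form the concatenation of the geodesics $[z_n,x_n]$, $[x_n,y_n]$, and $[y_n,w_n]$, whose total length is $O(r_n)$, and whose endpoints $z_n,w_n$ lie on $Z$. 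If this path is a $(K',\epsilon')$-quasi-geodesic with $K',\epsilon'$ independent of $n$, then the single Morse gauge $N$ forces the whole path, and in particular $x_n$, to lie within $N(K',\epsilon')$ of $Z$, contradicting $d(x_n,Z)=r_n\to\infty$.

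The crux, and the main obstacle of the whole proof, is verifying that this bridge path is a \emph{uniform} quasi-geodesic. One must rule out backtracking, i.e.\ show that the distance between points lying on different segments is comparable to the length of the subpath joining them, with constants independent of $n$. This is exactly where the lower bound $d(z_n,w_n)\ge c\,r_n$ enters: because the two projections are a fixed proportion of $r_n$ apart while the three segments each have length $O(r_n)$, the endpoint distance $d(z_n,w_n)$ stays comparable to the path length, preventing the nearest-point geodesics from folding back onto one another. Making this estimate precise, so that the quasi-geodesy constants $K',\epsilon'$ depend only on $c$ (and not on $n$), is the delicate step that lets a single application of the Morse property deliver the contradiction and thereby produce the desired sublinear gauge $\rho$.
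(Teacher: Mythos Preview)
The paper does not contain a proof of this statement: Theorem~\ref{Cont} is quoted verbatim as Theorem~1.4 of \cite{arzhan} and is used throughout as a black box, so there is no ``paper's own proof'' against which to compare your outline.

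As an independent assessment: your sketch for the direction contracting $\Rightarrow$ Morse is the standard one and is essentially what \cite{arzhan} does. For Morse $\Rightarrow$ contracting, your contradiction setup is reasonable, but the step you flag as ``the crux'' is genuinely incomplete as written. Knowing only that the three legs $[z_n,x_n]$, $[x_n,y_n]$, $[y_n,w_n]$ have lengths $O(r_n)$ and that $d(z_n,w_n)\ge c\,r_n$ does \emph{not} by itself force the concatenation to be a uniform quasi-geodesic: you must also rule out that, say, $[z_n,x_n]$ and $[x_n,y_n]$ nearly double back on each other, which requires exploiting the fact that $z_n,w_n$ are \emph{nearest-point} projections (so that any geodesic from $x_n$ first descends to distance roughly $r_n$ from $Z$ before it can make lateral progress). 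In \cite{arzhan} this implication is not handled by a single bridge-path argument; rather it is routed through an intermediate characterization (superlinear lower divergence), and the uniform quasi-geodesic needed there is built more carefully. If you want a direct argument along your lines, you should expect to insert an additional lemma controlling how geodesics from $x_n$ and $y_n$ to $Z$ interact with the segment $[x_n,y_n]$ before the Morse gauge can be invoked.
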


Examples of contracting (or Morse) quasi-geodesics include
quasi-geodesics in hyperbolic spaces, axis of pseudo-Anosov mapping classes in
Teichm{\"u}ller space \cite{Minsky} etc.

\section{Contracting Boundary and Topology on it}
\begin{definition}\label{Morse Constant}
Given a sublinear function $\rho$ and constants $L \geq 1$ and $A\geq 0$, define:
       $$ k(\rho,L,A):=max\{3A,3L^{2},1+\inf\{R>0|\mbox{ for all } r\geq
R,3L^{2}\rho(r)\leq r\} \}$$
       Define:
       $$ k'(\rho,L,A):= (L^{2}+2)(2k(\rho,L,A)+A).$$
\end{definition}

\textit{Notation:} If $f$ and $g$ are functions then we say $f \preceq g$ if there
exists a constant $C > 0$ such
that $f(x) \leq Cg(Cx + c) + C$ for all $x$ . If $f \preceq g$ and $f \succeq g$
then we write $f\asymp g$.

\begin{lemma}(Lemma 6.3 of \cite{arzhan})\label{lemma1}
      Given a sublinear function $\rho$ and a constant $C \geq 0$ there exists a
sublinear function
      $\rho' \asymp \rho$ such that if $Z \subseteq X$ and $Z' \subseteq X$ have
Hausdorff distance at most $C$ and $Z$ is $\rho$–$contracting$ then $Z'$ is
$\rho'$–$contracting$.
\end{lemma}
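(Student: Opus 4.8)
The plan is to compare the closest-point projections onto $Z'$ with those onto $Z$: the key point is that they differ only by an additive amount of order $C$, and once this is known the $\rho$-contraction of $Z$ can be transported to $Z'$ at the cost of enlarging $\rho$ within its $\asymp$-class. Throughout, I will assume, as is implicit in the definition, that all the closest-point projection sets $\pi_Z(\cdot)$ and $\pi_{Z'}(\cdot)$ are non-empty (for instance $Z$ and $Z'$ closed and $X$ proper); in general one replaces them by sets of $\epsilon$-closest points and absorbs $\epsilon$ into the constants.

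First I would record the two elementary consequences of $Z$ and $Z'$ having Hausdorff distance at most $C$: for every $x\in X$ one has $\abs{d(x,Z)-d(x,Z')}\le C$, and every point of one of the two sets lies within $C$ of the other. The one substantive tool is an \emph{almost-projection} estimate for a $\rho$-contracting set $Z$: if $z\in Z$ satisfies $d(x,z)\le d(x,Z)+k$, then $d\big(z,\pi_Z(x)\big)\le 2k+\rho(d(x,Z))$. To prove this I would take a geodesic in $X$ from $x$ to $z$ and let $m$ be its point at distance exactly $d(x,Z)$ from $x$; then $d(m,z)\le k$, so $d(m,Z)\le k$, and since $d(x,m)=d(x,Z)$ the contraction inequality applies to the pair $(x,m)$ and gives $\mathrm{diam}\big(\pi_Z(x)\cup\pi_Z(m)\big)\le\rho(d(x,Z))$; combining $d(z,m)\le k$, $d(m,\pi_Z(m))\le k$ and this bound yields the claim. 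Applying the contraction inequality to $(x,x)$ likewise gives $\mathrm{diam}\,\pi_Z(x)\le\rho(d(x,Z))$.

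Next, fix $x,y\in X$ with $d(x,y)\le d(x,Z')$ and choose $a\in\pi_{Z'}(x)$, $b\in\pi_{Z'}(y)$; bounding $d(a,b)$ (with $y=x$ this also bounds $\mathrm{diam}\,\pi_{Z'}(x)$) is all that is required. Pick $z_a,z_b\in Z$ with $d(a,z_a)\le C$ and $d(b,z_b)\le C$. The Hausdorff bounds give $d(x,z_a)\le d(x,Z)+2C$ and $d(y,z_b)\le d(y,Z)+2C$, so the almost-projection estimate places $z_a$ within $4C+\rho(d(x,Z))$ of $\pi_Z(x)$ and $z_b$ within $4C+\rho(d(y,Z))$ of $\pi_Z(y)$. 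It then remains to bound the distance between $\pi_Z(x)$ and $\pi_Z(y)$ using the $\rho$-contraction of $Z$, and here is the only non-routine point: we know $d(x,y)\le d(x,Z')\le d(x,Z)+C$, but not $d(x,y)\le d(x,Z)$. When $d(x,y)\le d(x,Z)$ the contraction inequality applies directly. Otherwise I would truncate a geodesic from $x$ to $y$ at distance $d(x,Z)$ from $x$ to get a point $y_0$ with $d(x,y_0)=d(x,Z)$ and $d(y_0,y)\le C$, apply contraction to $(x,y_0)$, and use the almost-projection estimate a second time to pass from $\pi_Z(y_0)$ to $\pi_Z(y)$, noting that any closest point of $y$ in $Z$ is $2C$-almost-closest to $y_0$ because $d(y_0,y)\le C$.

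Finally I would assemble the triangle inequalities. Using $d(a,z_a),d(b,z_b)\le C$, the two almost-projection bounds, the contraction bound just obtained, and the crude estimates $d(x,Z),d(y,Z),d(y_0,Z)\le 2\,d(x,Z')+3C$ together with monotonicity of $\rho$, one reaches an inequality of the shape $d(a,b)\le 14C+6\,\rho\big(2\,d(x,Z')+3C\big)$; hence $Z'$ is $\rho'$-contracting with $\rho'(r):=14C+6\,\rho(2r+3C)$. It then only remains to check that $\rho'$ is non-decreasing, eventually non-negative, and sublinear (immediate, since $\rho'(r)/r\to 0$ as $\rho(r)/r\to 0$), and that $\rho\le\rho'$ while $\rho'(r)\le 14C+6\,\rho\big((3C+2)r+(3C+2)\big)$, so $\rho\preceq\rho'\preceq\rho$, i.e.\ $\rho'\asymp\rho$. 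As a sanity check on the qualitative content, one could also just invoke Theorem \ref{Cont}: an $N$-Morse set clearly remains Morse under a bounded Hausdorff perturbation, hence $Z'$ is contracting; the projection argument above is precisely what is needed to keep the new gauge $\rho'$ in the $\asymp$-class of $\rho$.
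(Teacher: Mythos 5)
Your argument is correct. Note that the paper itself does not prove this lemma at all: it is imported verbatim as Lemma 6.3 of the cited Arzhantseva--Cashen--Gruber--Hume paper, so the only meaningful comparison is with that source, and your proof is essentially the same projection-comparison argument used there: an almost-projection estimate for $\rho$-contracting sets (almost-closest points of $x$ in $Z$ lie within $2k+\rho(d(x,Z))$ of $\pi_Z(x)$), transport of $\pi_{Z'}$ to $\pi_Z$ at cost $O(C)$ via the Hausdorff bound, and the truncation trick to handle the genuine crux that the hypothesis gives $d(x,y)\leq d(x,Z')\leq d(x,Z)+C$ rather than $d(x,y)\leq d(x,Z)$ — which you resolve correctly by introducing $y_0$ with $d(x,y_0)=d(x,Z)$ and applying the almost-projection estimate once more. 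Two small bookkeeping points, neither of which affects validity: your bound on $d(a,b)$ for $a\in\pi_{Z'}(x)$, $b\in\pi_{Z'}(y)$ controls pairs with one point in each projection set, but the contraction condition bounds $\mathrm{diam}\bigl(\pi_{Z'}(x)\cup\pi_{Z'}(y)\bigr)$, so pairs lying entirely in $\pi_{Z'}(y)$ need the extra triangle-inequality step through a fixed $a\in\pi_{Z'}(x)$ (a factor of $2$ absorbed into $\rho'$); and the explicit constants ($14C+6\rho(2r+3C)$, the exponent counts in the chain of estimates) are slightly loose as written, but only the shape $\rho'(r)=O(C)+O(1)\cdot\rho(O(1)r+O(C))$ matters for $\rho'\asymp\rho$, which your final check establishes. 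Your closing remark is also apt: Theorem \ref{Cont} gives the qualitative statement (Morseness survives bounded Hausdorff perturbation) for free, but the quantitative projection argument is exactly what keeps the new gauge in the $\asymp$-class of $\rho$, which is the content of the lemma.
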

\begin{lemma}(Lemma 3.6 of \cite{cashen2017})\label{Clemma}
Given a sublinear function $\rho$ there is a sublinear function $\rho \asymp \rho'$
such that every subsegment of a $\rho$-$contracting$ geodesic is $\rho'$-$contracting$.
\end{lemma}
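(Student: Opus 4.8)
The plan is to deduce the contracting inequality for a subsegment $\gamma' := \gamma|_{[a,b]}$ directly from that of $\gamma$, by controlling how the closest-point projection $\pi_{\gamma'}$ differs from $\pi_{\gamma}$. Write $p := \gamma(a)$ and $q := \gamma(b)$ for the endpoints of $\gamma'$. The governing principle is a \emph{clamping} statement: for every $x \in X$, the projection $\pi_{\gamma'}(x)$ should coincide, up to a controlled error, with $\pi_\gamma(x)$ whenever the latter lands in the parameter interval $[a,b]$, and should be pinned near the nearer endpoint $p$ or $q$ whenever $\pi_\gamma(x)$ lands outside $[a,b]$. Granting such a comparison, the contracting inequality for $\gamma'$ follows by replacing the behaviour of $\pi_{\gamma'}$ with that of $\pi_\gamma$ and invoking the $\rho$-contraction of $\gamma$.

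First I would record the single-point estimate: taking $y=x$ in the definition of $\rho$-contraction gives $\operatorname{diam}(\pi_\gamma(x)) \le \rho(d(x,\gamma))$, so projection sets to $\gamma$ are sublinearly small. The heart of the argument is then the quantitative clamping lemma. For a point $x$ with $\pi_\gamma(x) = \gamma(t)$ and $t \le a$, I would show that $d(x,\gamma') = d(x,p)$ up to an additive error controlled by $\rho(d(x,\gamma))$, and that every element of $\pi_{\gamma'}(x)$ lies within such an error of $p$; the symmetric statement holds at $q$ when $t \ge b$, while for $t \in [a,b]$ one has $\pi_{\gamma'}(x) \subseteq \pi_\gamma(x)$ and $d(x,\gamma') = d(x,\gamma)$. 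To prove the pinning I would take a closest point $w = \gamma(s) \in \pi_{\gamma'}(x)$ and feed the geodesic $[x,w]$ into the contraction property of $\gamma$: since $\gamma(t)$ is the $\gamma$-closest point to $x$ while $w$ sits at parameter $s \ge a > t$, the geodesic from $x$ to $w$ is forced to track $\gamma$ near the projections, which pins $s$ within a $\rho$-controlled distance of $a$, i.e. $w$ near $p$. This is the step where the constants $k(\rho,L,A)$ and $k'(\rho,L,A)$ of Definition \ref{Morse Constant} enter.

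With the clamping lemma in hand I would verify the contracting inequality for $\gamma'$ by case analysis on the positions of $\pi_\gamma(x)$ and $\pi_\gamma(y)$ relative to $[a,b]$, under the hypothesis $d(x,y) \le d(x,\gamma')$. When both projections land in $[a,b]$ one has $d(x,\gamma') = d(x,\gamma)$, hence $d(x,y) \le d(x,\gamma)$, so the $\rho$-contraction of $\gamma$ bounds $\operatorname{diam}(\pi_\gamma(x)\cup\pi_\gamma(y))$ by $\rho(d(x,\gamma')) $, and the inclusions $\pi_{\gamma'}(x)\subseteq\pi_\gamma(x)$, $\pi_{\gamma'}(y)\subseteq\pi_\gamma(y)$ transfer the bound to $\gamma'$. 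The remaining cases, in which at least one of the two projections falls outside $[a,b]$, are handled by the clamping lemma, which places the offending projection near $p$ or $q$. Defining $\rho'$ as the appropriate combination of $\rho$ and the clamping errors produced in these cases, I would check that $\rho'$ is non-decreasing, eventually non-negative, sublinear, and $\rho' \asymp \rho$, completing the proof.

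The main obstacle is the mismatch between $d(x,\gamma')$ and $d(x,\gamma)$: because $\gamma' \subseteq \gamma$ one only has $d(x,\gamma') \ge d(x,\gamma)$, so the hypothesis $d(x,y) \le d(x,\gamma')$ does \emph{not} directly yield the inequality $d(x,y) \le d(x,\gamma)$ required to apply the contraction of $\gamma$. This is sharpest in the \emph{straddling} case, where $\pi_\gamma(x)$ lies before $a$ while $\pi_\gamma(y)$ lies inside $[a,b]$. The resolution, which must be made quantitative, is that straddling is expensive: the triangle inequality along the geodesic $\gamma$ forces $d(x,y)$ to be at least $d(x,\gamma')$ minus a $\rho$-sublinear error, so the constraint $d(x,y) \le d(x,\gamma')$ pins $\pi_\gamma(y)$ within a sublinear distance of $a$, whence both $\pi_{\gamma'}(x)$ and $\pi_{\gamma'}(y)$ lie near $p$. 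Carrying these error terms through every case and confirming that the resulting gauge $\rho'$ remains sublinear and equivalent to $\rho$ is the delicate bookkeeping on which the argument turns.
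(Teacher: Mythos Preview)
The paper does not supply its own proof of this lemma: it is quoted verbatim as Lemma~3.6 of \cite{cashen2017} and used as a black box, so there is no in-paper argument to compare against. Your proposal is therefore not competing with anything written here.

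That said, your outline is the standard route and matches the spirit of the original Cashen--Mackay argument: compare $\pi_{\gamma'}$ with $\pi_\gamma$ via a clamping statement at the endpoints, then do a case split on where $\pi_\gamma(x)$ and $\pi_\gamma(y)$ fall relative to $[a,b]$. You have also correctly identified the one genuine subtlety, namely that $d(x,y)\le d(x,\gamma')$ does not immediately give $d(x,y)\le d(x,\gamma)$ in the straddling case. One small caution: invoking the constants $k(\rho,L,A)$ and $k'(\rho,L,A)$ of Definition~\ref{Morse Constant} is heavier machinery than needed here, since those control quasi-geodesic fellow-travelling rather than projection behaviour; the clamping step can be done directly from the contraction inequality for $\gamma$ by walking along a geodesic from $x$ to $w\in\pi_{\gamma'}(x)$ in steps of length $d(\cdot,\gamma)$ and summing the resulting $\rho$-bounds on projection drift. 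With that adjustment your plan would go through and produce a $\rho'\asymp\rho$.
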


\begin{lemma}(Lemma 4.4 of \cite{cashen2017})\label{Keyprop1}  Suppose
$\alpha:[0,\infty)\to X$ is a continuous, $\rho$-contracting, $(L, A)$-quasi-geodesic ray and
$\beta:[0,\infty)\to X$ is a continuous $(L, A)$-quasi-geodesic ray in $X$ such that $d(\alpha(0),
\beta(0)) \leq k(\rho, L, A)$. If
there are $r, s \in [0, \infty)$ such that $d(\alpha({r}), \beta({s})) \leq k(\rho, L,
A)$ then the Hausdorff distance $d_{Haus}(\alpha[0,r],\beta[0,s]) \leq k'(\rho, L, A)$.
If $\alpha[0,\infty)$ and $\beta[0,\infty)$ are asymptotic then their Hausdorff
distance is at most $k'(\rho, L, A)$.
\end{lemma}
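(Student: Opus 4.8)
The plan is to prove this as a quantitative Morse lemma for the $\rho$-contracting ray $\alpha$. Throughout write $k=k(\rho,L,A)$ and $k'=k'(\rho,L,A)$, and let $\pi=\pi_{\alpha}$ denote closest-point projection onto $\alpha[0,\infty)$. I would establish the two inclusions $\beta[0,s]\subseteq N_{k'}(\alpha[0,r])$ and $\alpha[0,r]\subseteq N_{k'}(\beta[0,s])$ separately; together these give the Hausdorff bound, and the asymptotic statement then follows by a limiting argument.

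The workhorse is the defining property of $\rho$-contraction: whenever $d(x,y)\le d(x,\alpha[0,\infty))$ one has $\mathrm{diam}(\pi(x)\cup\pi(y))\le\rho(d(x,\alpha[0,\infty)))$. From this I would extract the standard projection estimate: along any continuous path lying at distance $\ge D$ from $\alpha$, one may chop the parameter interval into finitely many pieces on each of which $\beta$ moves spatial distance $\le D$, and summing the resulting $\le\rho(\cdot)$ jumps of $\pi$ controls the total motion of $\pi$ along the path. Because $\beta$ is an $(L,A)$-quasi-geodesic, the number of pieces is at most linear in $($distance between the path's endpoints$)/D$; since $\rho$ is sublinear, and the threshold $\inf\{R>0:3L^{2}\rho(r)\le r\text{ for all }r\ge R\}$ is exactly the one built into $k$, this forces the $\pi$-image of a deep excursion of $\beta$ to travel a controlled fraction of the excursion's height, and (after a single self-improving substitution) bounds the number of pieces by a constant multiple of $L^{2}$.

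For $\beta[0,s]\subseteq N_{k'}(\alpha[0,r])$, the endpoint hypotheses first force $\pi(\beta(0))$ to lie $2k$-close to $\alpha(0)$ and $\pi(\beta(s))$ to lie $2k$-close to $\alpha(r)$, so that every projection encountered along $\beta$ lands in a bounded neighbourhood of $\alpha[0,r]$ and it suffices to bound $D:=\sup_{t\in[0,s]}d(\beta(t),\alpha)$. Assuming $D>2k$, isolate the maximal parameter interval around the deepest point of $\beta$ on which $d(\beta(\cdot),\alpha)\ge D/2$; its endpoints are interior points (since $d(\beta(0),\alpha),d(\beta(s),\alpha)\le k<D/2$) at distance exactly $D/2$ from $\alpha$. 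Applying the projection estimate on this interval, and combining the resulting control on $\pi$ with the lower quasi-geodesic inequality for $\beta$ and the triangle inequality at the deepest point, should yield $D\le(L^{2}+2)(2k+A)=k'$. The reverse inclusion $\alpha[0,r]\subseteq N_{k'}(\beta[0,s])$ is obtained by the same mechanism read from the $\alpha$-side: a point $\alpha(t)$ far from $\beta[0,s]$ splits $\beta$ at the parameter where it comes closest to $\alpha(t)$; the contraction inequality forces the $\pi$-images of the two halves to sit on opposite sides of $\alpha(t)$ along $\alpha$, separated by roughly $2\,d(\alpha(t),\beta[0,s])$, while the $(L,A)$-quasi-geodesic bound on how far $\beta$ can travel caps this separation, again giving the bound $k'$. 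Finally, if the rays are asymptotic then for each large $r$ one can locate an $s$ with $d(\alpha(r),\beta(s))\le k$, apply the bound just proved to the truncations $\alpha[0,r]$ and $\beta[0,s]$, and let $r\to\infty$ to conclude $d_{Haus}(\alpha[0,\infty),\beta[0,\infty))\le k'$.

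The hard part is the quantitative bookkeeping in the inclusion arguments: converting the soft principle ``sublinearity of $\rho$ beats the linear-in-$1/D$ count of subdivision points'' into the exact constant $(L^{2}+2)(2k+A)$. This needs a judicious choice of subdivision mesh so that each step's spatial displacement stays comfortably below the running distance to $\alpha$, taming the self-referential bound on the number of steps by one round of substitution, and arranging the triangle inequalities so that every error term is absorbed using precisely the threshold encoded in $k$. The degenerate endpoint configurations, where $\beta(0)$ and $\beta(s)$ merely lie $k$-close to $\alpha$ rather than on it, are handled by applying the same estimates to slightly enlarged pieces and present no essential difficulty.
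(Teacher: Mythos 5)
First, note that the paper does not prove this statement at all: it is quoted verbatim as Lemma~4.4 of Cashen--Mackay and used as a black box, so the only meaningful comparison is with the mechanism your sketch proposes, and that mechanism has a genuine gap at its central quantitative step. To bound $D:=\sup_{t\in[0,s]}d(\beta(t),\alpha)$ you isolate the maximal interval around the deepest point on which $d(\beta(\cdot),\alpha)\ge D/2$ and then combine the projection estimate with the quasi-geodesic inequality and a triangle inequality at the deepest point. For merely sublinear $\rho$ this cannot close: on that interval the ``feet'' of the excursion sit at height $D/2$, the contraction property only gives $\rho(d)\le d/(3L^{2})$ for $d\ge k(\rho,L,A)$, so the projection displacement you can extract is itself of order $D$, and every term in the resulting chain of inequalities (endpoint distance $\le D+P$, excursion length between $\approx D$ and $\approx L(D+P+A)$, $P\lesssim D$) scales linearly in $D$; the inequalities are mutually consistent for all $D$ and no bound on $D$ ever appears --- indeed the constant $k$ never enters your estimate except as the threshold for sublinearity. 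The correct decomposition, and the whole reason $k(\rho,L,A)$ is defined via $3L^{2}\rho(r)\le r$, is to take the excursions of $\beta$ outside the \emph{fixed} $k$-neighbourhood of $\alpha$: then the re-entry cost at the feet is the constant $2k$, the subdivision argument makes the projection displacement at most a definite fraction (about $1/(2L)$) of the excursion's length, and the quasi-geodesic inequality then bounds the excursion length by roughly $2L(2k+A)$, hence the depth by $k'=(L^{2}+2)(2k+A)$. Your argument can be repaired, but only by replacing the $D/2$ threshold by $k$, i.e.\ by changing the decomposition, not by ``bookkeeping.''

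Two further soft spots: in the reverse inclusion, the projections of the two halves of $\beta$ split at the point closest to $\alpha(t)$ cannot be ``separated by roughly $2d(\alpha(t),\beta)$'' --- both halves contain the split point, so their projection images share $\pi$ of that point; the standard route is instead coarse connectedness of $t\mapsto\pi_{\alpha}(\beta(t))$ (jumps bounded via the contraction inequality together with the depth bound already proved), the fact that $\beta(0)$ and $\beta(s)$ project $2k$-close to $\alpha(0)$ and $\alpha(r)$, and the quasi-geodesic property of $\alpha$ to convert parameter-betweenness into spatial closeness. Finally, in the asymptotic case you assert that for each large $r$ there is $s$ with $d(\alpha(r),\beta(s))\le k$; asymptoticity only gives a bound by the (possibly much larger) Hausdorff constant, so this reduction needs its own argument before the first part of the lemma can be invoked.
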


\begin{lemma}\label{CM 4.6} (Lemma 4.6 of \cite{cashen2017}) Let $\alpha$ be a $\rho$ contracting geodesic ray, and let $\beta$ be a continuous $(L,A)$ quasi-geodesic with $\alpha_{0}=\beta_{0}=o$. Given some numbers $r$ and $J$, suppose there exists a point $x\in \alpha$ with $d(x, o)\geq R$ and $d(x, \beta)\leq J$. Let $y$ be the last point on the subsegment of $\alpha$ between $o$ and $x$ such that $d(y,\beta)=\kappa(\rho,L,A)$. Then there exists a function $\lambda(\phi,p,q)$ defined for sublinear function $\phi$, $p\geq1$ and $q\geq 0$ such that $\lambda$ is monotonically increasing in $p,q$ and:
$$d(x,y)\leq 2J+\lambda(\phi,L,A). $$ Thus:
                                         $$d(o,y)\geq R-2J-\lambda(\rho,L,A)$$
\end{lemma}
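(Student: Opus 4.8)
The plan is to reduce the whole estimate to one triangle inequality together with one application of Lemma~\ref{Keyprop1}. I would first choose a point $p$ on $\beta$ with $d(x,p)\le J$ and let $z$ be a point of the subsegment $\alpha[o,x]$ closest to $p$; since $x$ itself lies on $\alpha[o,x]$, one gets $d(p,z)=d(p,\alpha[o,x])\le d(p,x)\le J$, hence $d(x,z)\le d(x,p)+d(p,z)\le 2J$. So it is enough to produce a constant $\lambda(\rho,L,A)$, non-decreasing in $L$ and $A$, with $d(y,z)\le\lambda(\rho,L,A)$: granting this, $d(x,y)\le d(x,z)+d(z,y)\le 2J+\lambda(\rho,L,A)$, and, since $y$ lies on the geodesic $\alpha$ between $o$ and $x$, $d(o,y)=d(o,x)-d(x,y)\ge R-2J-\lambda(\rho,L,A)$; these are exactly the two displayed inequalities (with $\phi$ read as $\rho$).

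To bound $d(y,z)$, I would set $\kappa=\kappa(\rho,L,A)=k(\rho,L,A)$ of Definition~\ref{Morse Constant} and pick $q=\beta(s_q)$ with $d(y,q)=\kappa$. Since $\alpha$ is a $\rho$-contracting $(L,A)$-quasi-geodesic ray (being a geodesic), $\beta$ is an $(L,A)$-quasi-geodesic, $d(\alpha(0),\beta(0))=0\le k(\rho,L,A)$, and $d(\alpha(r),\beta(s_q))=\kappa=k(\rho,L,A)$ for $r$ the parameter of $y$, Lemma~\ref{Keyprop1} applies and yields $d_{\mathrm{Haus}}(\alpha[o,y],\beta[o,q])\le k'(\rho,L,A)$. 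Next I would exploit the defining property of $y$: the map $t\mapsto d(\alpha(t),\beta)$ is continuous and never equals $\kappa$ on the open arc of $\alpha$ strictly between $y$ and $x$, so there it is either everywhere $<\kappa$ or everywhere $>\kappa$. If $z$ lies on the $o$-side of $y$, then $y$ is between $z$ and $x$, so $d(y,z)\le d(x,z)\le 2J$ and we are done a fortiori; the substantive case is $z$ strictly between $y$ and $x$, where $d(z,\beta)\le d(z,p)\le J$. Here the point is that $\beta$ cannot return to within distance $\kappa$ of $\alpha$ at a point of $\alpha$ lying far past $y$ and before $x$: applying Lemma~\ref{Keyprop1} once more to a sub-quasi-geodesic of $\beta$ running from $q$ to a point realizing the distance from $\beta$ to such a point $w\in\alpha[y,x]$ would force that sub-quasi-geodesic to Hausdorff-fellow-travel $\alpha[y,w]$ within $k'(\rho,L,A)$, and comparing with the maximality of $y$ then bounds $d(y,w)$, hence $d(y,z)$, by $k'(\rho,L,A)+\kappa(\rho,L,A)$. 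I would therefore take $\lambda(\rho,L,A)=k'(\rho,L,A)+\kappa(\rho,L,A)$, which is non-decreasing in $L$ and $A$ because $k$ and $k'$ are.

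The hard part is exactly this last claim — that after $\beta$ reaches distance $\kappa$ from $\alpha$ at $y$ it stays away from $\alpha$ at all subsequent parameters up to $x$, so that $y$ is coarsely the projection point $z$. This is where the size of $\kappa(\rho,L,A)$ relative to $L$ and $A$, built into Definition~\ref{Morse Constant} (there $\kappa\ge 3A$ and $\kappa\ge 3L^{2}$), is indispensable: a detour of $\beta$ out to distance $\kappa$ from $\alpha$ and back would cost either a long near-parallel excursion, impossible because $\alpha$ is contracting (Theorem~\ref{Cont}, Lemma~\ref{Clemma}), or an additive defect larger than $A$, contradicting that $\beta$ is an $(L,A)$-quasi-geodesic — and Lemma~\ref{Keyprop1} is the precise tool that turns this dichotomy into the numerical bound. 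The rest — the easy reduction when $z$ falls on the $o$-side of $y$, the case $J>\kappa$ (where the target inequality is weaker and the same argument runs with $\kappa$ in the role $J$ played as an upper bound), and checking that $\lambda$ depends only on $\rho,L,A$ and is monotone — I expect to be routine bookkeeping.
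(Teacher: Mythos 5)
You should first note that the paper does not actually prove this statement: Lemma~\ref{CM 4.6} is imported verbatim from Lemma~4.6 of \cite{cashen2017}, with only the remark that the proof there also works for finite quasi-geodesic segments. So your proposal has to stand on its own, and its central step does not hold up. The reduction by the triangle inequality ($d(x,z)\le 2J$ for $z$ the projection of $p$ to $\alpha[o,x]$, so that it suffices to bound $d(y,z)$) is fine, but bounding $d(y,z)$ is the entire content of the lemma, and your argument for it fails in both branches of your own dichotomy. In the branch where every point of $\alpha$ strictly between $y$ and $x$ lies within distance $<\kappa$ of $\beta$, your key assertion that ``$\beta$ cannot return to within distance $\kappa$ of $\alpha$ at a point far past $y$'' is simply false --- it returns at every such point --- and the intended contradiction evaporates: applying Lemma~\ref{Keyprop1} to such a point $w\in\alpha[y,x]$ only gives that $\alpha[y,w]$ and a piece of $\beta$ are $k'$-Hausdorff close, which is perfectly compatible with $d(y,w)$ being arbitrarily large (that is exactly what fellow-travelling over a long stretch looks like) and contradicts neither the maximality of $y$ (which only forbids distance \emph{exactly} $\kappa$ past $y$) nor anything else; no bound $d(y,w)\le k'+\kappa$ follows. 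In the other branch, where all points of $\alpha$ strictly past $y$ are at distance $>\kappa$ from $\beta$, there is no point $w$ past $y$ within $\kappa$ of $\beta$ for your argument to use; the only point you control is $z$, which is merely within $J$ of $\beta$, and $J$ need not be $\le k(\rho,L,A)$, so the hypotheses of Lemma~\ref{Keyprop1} are not satisfied and it cannot be invoked.

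What this second branch actually requires --- and what Cashen--Mackay's proof supplies --- is the bounded-projection property of contracting geodesics: a continuous $(L,A)$-quasi-geodesic segment that stays at distance at least $\kappa(\rho,L,A)$ from a $\rho$-contracting geodesic has closest-point projection of diameter bounded by a constant depending only on $\rho,L,A$ (Lemma~4.2 of \cite{cashen2017}, in the spirit of \cite{arzhan}); this is precisely where the size of $\kappa(\rho,L,A)$ in Definition~\ref{Morse Constant} enters. Applied to the terminal piece of $\beta$ ending at $p$ that stays $\kappa$-far from $\alpha[y,x]$, together with the observation that the projection of $p$ lands within $2J$ of $x$, it yields the $2J+\lambda$ bound. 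Lemma~\ref{Keyprop1} is not a substitute for this estimate, and your closing paragraph defers exactly this input as ``routine bookkeeping''; as written, the proof has a genuine gap at its decisive step, and none of the lemmas quoted in this paper (\ref{Clemma}, \ref{Keyprop1}, \ref{Cont}) fills it.
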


\noindent{In the statement of Lemma 4.6 of \cite{cashen2017}, $\beta$ was taken to be a quasi-geodesic ray while the same proof works for any finite quasi-geodesic segment. }
\begin{definition}
(Contracting Boundary, $\partial_{c}X$): Let $X$ be a proper geodesic metric space
with basepoint $o$.
Define $\partial_{c}X$ to be the set of contracting quasi-geodesic rays based at $o$
modulo Hausdorff equivalence. \end{definition}

\begin{proposition}(Lemma 5.2 of \cite{cashen2017}) For each $\zeta \in \partial_{c}X$:
     \begin{enumerate}
     \item The set of contracting geodesic rays in $\zeta$ is non-empty.
     \item There is a sublinear function:
     $$ \rho_{\zeta}(r):=\underset{\alpha,x,y}{sup}\ diam\big( \pi_{\alpha}(x)\cup
\pi_{\alpha}(y) \big)$$
     Here the supremum is taken over geodesics $\alpha \in \zeta$ and points x and y
such
     that $d(x, y) \leq d(x, \alpha) \leq r$
     \item Every geodesic in $\zeta$ is $\rho_{\zeta}$-$contracting$.
     \end{enumerate}
\end{proposition}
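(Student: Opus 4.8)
The plan is to prove (1) first, then deduce (3) from the finiteness and sublinearity of $\rho_\zeta$, which is the real content of (2).

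\textbf{Part (1).} I would begin with any contracting $(L,A)$-quasi-geodesic ray $\gamma$ representing $\zeta$. After replacing $\gamma$ by a continuous ``taming'' (joining consecutive points $\gamma(n)$ by geodesics), which lies within bounded Hausdorff distance of $\gamma$ and, by Lemma~\ref{lemma1}, is still $\rho$-contracting for a possibly enlarged sublinear $\rho$, I may assume $\gamma$ is continuous. Since $X$ is proper, Arzela--Ascoli lets the geodesics $[o,\gamma(n)]$ subconverge, uniformly on compact sets, to a geodesic ray $\alpha$ based at $o$. Applying Lemma~\ref{Keyprop1} to the $\rho$-contracting ray $\gamma$ and the geodesic segment $[o,\gamma(n)]$ --- both start at $o$ and end at $\gamma(n)$, so its hypotheses hold with the common constant $k(\rho,L,A)$, and, as in the remark after Lemma~\ref{CM 4.6}, the estimate is valid for finite quasi-geodesic segments --- yields $d_{Haus}(\gamma[0,n],[o,\gamma(n)]) \leq k'(\rho,L,A)$ uniformly in $n$. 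Letting $n \to \infty$ gives $d_{Haus}(\gamma,\alpha) \leq k'(\rho,L,A)$, so $\alpha \in \zeta$; then Lemma~\ref{lemma1} upgrades $\alpha$ to a $\rho'$-contracting geodesic ray for some $\rho' \asymp \rho$, proving (1).

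\textbf{Parts (2) and (3).} The crux is a \emph{uniform} contraction gauge valid for every geodesic ray in $\zeta$. Fix the $\rho_0$-contracting geodesic ray $\alpha_0 \in \zeta$ from part (1). Any geodesic ray $\alpha \in \zeta$ is Hausdorff-equivalent to $\alpha_0$; since $\alpha$ and $\alpha_0$ are geodesic rays from $o$ at finite Hausdorff distance, they fellow-travel and are in particular asymptotic, so Lemma~\ref{Keyprop1} with $L=1,\,A=0$ pins their Hausdorff distance down to the single constant $C_0 := k'(\rho_0,1,0)$, independent of $\alpha$. Feeding the pair $(\rho_0,C_0)$ into Lemma~\ref{lemma1} then produces one sublinear function $\rho_1 \asymp \rho_0$, depending only on $\alpha_0$, such that every set at Hausdorff distance $\leq C_0$ from $\alpha_0$ --- in particular every geodesic ray in $\zeta$ --- is $\rho_1$-contracting. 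Hence for any geodesic $\alpha \in \zeta$ and any $x,y$ with $d(x,y) \leq d(x,\alpha) \leq r$ one gets $diam(\pi_\alpha(x) \cup \pi_\alpha(y)) \leq \rho_1(d(x,\alpha)) \leq \rho_1(r)$, so $\rho_\zeta(r) \leq \rho_1(r) < \infty$. Monotonicity of $\rho_\zeta$ is clear from the definition, it is non-negative as a supremum of diameters, and $\rho_\zeta(r)/r \leq \rho_1(r)/r \to 0$; thus $\rho_\zeta$ is sublinear, which is (2). Part (3) is then formal: given a geodesic $\alpha \in \zeta$ and points $x,y$ with $d(x,y) \leq d(x,\alpha)$, taking $r = d(x,\alpha)$ in the defining supremum gives $diam(\pi_\alpha(x) \cup \pi_\alpha(y)) \leq \rho_\zeta(d(x,\alpha))$, i.e.\ $\alpha$ is $\rho_\zeta$-contracting.

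\textbf{Main obstacle.} The only substantive difficulty is the uniformity in (2). A priori, ``Hausdorff equivalence'' supplies only a pair-dependent constant between two representatives of $\zeta$, which would leave the supremum defining $\rho_\zeta$ possibly infinite. This is exactly what Lemma~\ref{Keyprop1} resolves: the contracting representative $\alpha_0$ controls \emph{every} ray asymptotic to it with the universal constant $k'(\rho_0,1,0)$, after which Lemma~\ref{lemma1} converts this uniform Hausdorff bound into a uniform contraction gauge. The limiting argument in (1) --- Arzela--Ascoli together with the stability of contracting quasi-geodesics --- is routine once the stability lemmas above are in place.
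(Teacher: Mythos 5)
The paper does not prove this proposition at all—it is imported verbatim as Lemma 5.2 of Cashen--Mackay—and your blind argument is a correct reconstruction along essentially the same lines as that source: Arzel\`a--Ascoli on the geodesics $[o,\gamma(n)]$ plus stability of contracting quasi-geodesics for (1), and the uniform Hausdorff bound $k'(\rho_0,1,0)$ from Lemma \ref{Keyprop1} fed into Lemma \ref{lemma1} to get one contraction gauge for all geodesic representatives, which makes $\rho_\zeta$ finite and sublinear and renders (3) formal. The only point to flag is that your use of Lemma \ref{Keyprop1} with the finite segments $[o,\gamma(n)]$ in place of a ray requires the segment version of that lemma (as with the remark the paper makes after Lemma \ref{CM 4.6}); this is harmless since the proof there only involves the subpaths up to the nearby parameters, but it is a mild extension of the statement as quoted.
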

$Notation:$ Let $N^{c}_{r}o=\{x\in X|\ d(o,x)\geq r\}$ for a metric space $(X,d)$.
 \begin{definition}
 \textit{ (Topology of fellow travelling quasi-geodesics, Cashen-Mackay, Definitions 5.3, 5.4 of \cite{cashen2017}):}
     Let $X$ be a proper geodesic metric space. Take  $\zeta \in \partial_{c}X$. Fix
a geodesic ray $\alpha^{\zeta} \in \zeta$.
    For each $r \geq 1$, define $U(\zeta, r)$ to be the set of points  $\eta \in
\partial_{c}X$ such that for all
    $L \geq 1$ and $A \geq 0$ and every continuous $(L,A)$-quasi-geodesic ray
$\beta \in \eta$ we have
    \begin{equation}
    \tag{*}
         d(\beta, \alpha^{\zeta} \cap N^{c}_{r}o) \leq k(\rho_{\zeta},L,A).
    \label{eqn:key1}
         \end{equation}
     Define a topology on $\partial_{c}X$ by
 \begin{equation}
\tag{**}
\mathcal{F}\mathcal{Q}:=\{U \subset \partial_{c}X\ |\ \text{for all}\ \zeta \in U,
\text{there exists}\ r \geq 1, U(\zeta, r)\subset U \}
\label{eqn:key2}
\end{equation}
     The contracting boundary equipped with this topology is called
\textit{topology of fellow travelling quasi-geodesics} by Cashen-Mackay
\cite{cashen2017} and is
     denoted by $\partial^{\mathcal{F}\mathcal{Q}}_{c}X$. \end{definition}\par

   A \textit{bordification} of a Hausdorff topological space $X$ is a a Hausdorff space $\overline{X}$
   containing $X$ as an open, dense subset.
     The contracting boundary of a proper geodesic metric space provides a
bordification of $X$ by $\overline{X}:= X \cup \partial_{c}X$  as follows (Definition 5.14 of \cite{cashen2017}):\\
     For $x \in X$, we take a neighborhood basis for $x$ to be metric balls about $x$.

     For $\zeta \in \partial_{c}X$, we take a neighborhood basis for $\zeta$ to be the sets
$\widehat{U}(\zeta, r)$ consisting of $U(\zeta, r)$ and
     points $x \in X$ such that $d(\gamma,N^{c}_{r}o \cap \alpha^{\zeta} )
\leq k(\rho_{\zeta} ,L,A)$ for every $L\geq 1, A\geq 0$, and continuous $(L, A)$-quasi-geodesic segment $\gamma$ with endpoints $o$ and $x$.

     Consider
$\mathcal{T} := \{ U \subseteq X\cup \partial_{c}X\ |\ \forall x \in X\cap U, \exists~ r\geq 0, B(x;r)\subseteq U\mbox{ and~}
\forall~ \zeta \in U\cap  \partial_{c}X,  \exists~ r\geq 1, \widehat{U}(\zeta,r) \subseteq U \}$.
$\mathcal{T}$ defines a topology on $\overline{X}$ and $\overline{X} $ with respect to this topology $\mathcal{T}$,
 is first countable, Hausdorff and $X$ is dense open set in $\overline{X}$. (See Proposition 5.15 of \cite{cashen2017})

     \begin{proposition} (Proposition 5.15 of \cite{cashen2017})
          $\overline{X}:= X \cup \partial_{c}X$ topologized as above defines a first
countable bordification of $X$ such that the
          induced topology on $\partial_{c}X$ is the topology of fellow-travelling
quasi-geodesics.
     \end{proposition}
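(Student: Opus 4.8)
The plan is to verify, in the order listed, the four assertions of the statement: $\mathcal{T}$ is a topology on $\overline{X}$; $X$ is open and dense in $\overline{X}$; $\overline{X}$ is first countable and Hausdorff; and the subspace topology that $\mathcal{T}$ induces on $\partial_c X$ is exactly $\mathcal{F}\mathcal{Q}$. The geometric engine throughout is Lemma \ref{Keyprop1} (together with Lemma \ref{CM 4.6}): once a continuous quasi-geodesic issuing from $o$ comes within the Morse constant $k(\rho_\zeta,L,A)$ of a deep point of the contracting geodesic ray $\alpha^\zeta$, its initial segment is uniformly Hausdorff close (by the constant $k'(\rho_\zeta,L,A)$) to the corresponding initial segment of $\alpha^\zeta$.

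For the topology axioms, the key elementary observation is the monotonicity $\widehat U(\zeta,r')\subseteq\widehat U(\zeta,r)$ for $r'\ge r$: since $N^c_{r'}o\subseteq N^c_r o$, the inequality defining membership at level $r'$ is stronger than the one at level $r$ (distance to a larger set is no larger). Arbitrary unions of sets in $\mathcal{T}$ lie in $\mathcal{T}$ directly from the pointwise form of the definition, and for a finite intersection $U_1\cap U_2$ one takes, at a boundary point $\zeta$, the level $r=\max(r_1,r_2)$ so that $\widehat U(\zeta,r)\subseteq\widehat U(\zeta,r_1)\cap\widehat U(\zeta,r_2)\subseteq U_1\cap U_2$, and at an interior point shrinks the radius. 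Next, a small metric ball about $x\in X$ lies in $X$ and meets no boundary point, so $X\in\mathcal{T}$; and $X$ is dense because for any $\zeta$ and any $r\ge1$ the point $\alpha^\zeta(t)$ with $t\ge r$ lies in $\widehat U(\zeta,r)$, since every continuous $(L,A)$-quasi-geodesic $\gamma$ from $o$ to $\alpha^\zeta(t)$ has endpoint $\alpha^\zeta(t)\in N^c_r o\cap\alpha^\zeta$, whence $d(\gamma,N^c_r o\cap\alpha^\zeta)=0$.

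The step where I expect the real work to lie is showing that $\{\widehat U(\zeta,n)\}_{n\in\mathbb{N}}$ is a neighborhood basis at $\zeta$; equivalently, that each $\widehat U(\zeta,r)$ contains a $\mathcal{T}$-open set containing $\zeta$. Concretely one must produce $r'=r'(\zeta,r)$ such that every $p\in\widehat U(\zeta,r')$ admits a basic neighborhood inside $\widehat U(\zeta,r)$ — a metric ball if $p\in X$, a set $\widehat U(p,r'')$ if $p\in\partial_c X$. The argument is the fellow-travelling dichotomy: membership of $p$ in $\widehat U(\zeta,r')$ forces every continuous quasi-geodesic from $o$ representing $p$ (or every segment from $o$ to $p$ when $p\in X$) to track $\alpha^\zeta$ up to a point of $N^c_{r'}o$ within the Morse constant, so by Lemma \ref{Keyprop1} its initial segment is $k'(\rho_\zeta,\cdot,\cdot)$-Hausdorff close to the corresponding initial segment of $\alpha^\zeta$; choosing $r'$ large compared with $r$ plus these Morse and Hausdorff constants then propagates the defining inequality of $\widehat U(\zeta,r)$ to all quasi-geodesics starting near $p$. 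Granting this, $\{\widehat U(\zeta,n)\}_n$ is a countable neighborhood basis at $\zeta$ (pass from $r\ge1$ to any integer $n\ge r$ using the monotonicity), metric balls of radius $1/n$ serve at interior points, and first countability follows.

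For the Hausdorff property, interior–interior separation is the metric topology of $X$; for $x\in X$ and $\zeta\in\partial_c X$ one picks $r$ larger than $d(o,x)+k(\rho_\zeta,1,0)$, so that no geodesic from $o$ to a point of a small ball $B$ about $x$ comes within $k(\rho_\zeta,1,0)$ of $\alpha^\zeta\cap N^c_r o$, giving $\widehat U(\zeta,r)\cap B=\emptyset$, and a $\mathcal{T}$-open neighborhood of $\zeta$ inside $\widehat U(\zeta,r)$ (from the previous step) together with $B$ separates them. For distinct $\zeta,\eta\in\partial_c X$: if no pair $\widehat U(\zeta,r),\widehat U(\eta,r')$ were disjoint, choosing points $p_{r,r'}$ in the intersections would yield quasi-geodesics from $o$ that simultaneously fellow-travel long initial segments of $\alpha^\zeta$ and of $\alpha^\eta$; letting $r,r'\to\infty$ and applying Lemma \ref{Keyprop1} twice forces $\alpha^\zeta$ and $\alpha^\eta$ to fellow-travel arbitrarily long initial segments, hence (again by Lemma \ref{Keyprop1}) to have finite Hausdorff distance, so $\zeta=\eta$, a contradiction. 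Finally, the claim about the induced topology is bookkeeping: $\widehat U(\zeta,r)\cap\partial_c X=U(\zeta,r)$ by construction, so any subspace-open $U\cap\partial_c X$ contains some $U(\zeta,r)$ about each of its points and lies in $\mathcal{F}\mathcal{Q}$; conversely, for $W\in\mathcal{F}\mathcal{Q}$ the set $U:=\bigcup_{\zeta\in W}\widehat U(\zeta,r_\zeta)$ (with $U(\zeta,r_\zeta)\subseteq W$) satisfies $U\cap\partial_c X=W$, and $U\in\mathcal{T}$ precisely by the neighborhood-basis property established above. The main obstacle, as indicated, is that neighborhood-basis step: making the dependence $r'=r'(\zeta,r)$ uniform over all quasi-geodesic constants, so that closeness to a deep point of $\alpha^\zeta$ at level $r'$ genuinely upgrades to the full defining inequality at level $r$ for every quasi-geodesic near $p$.
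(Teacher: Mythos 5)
The paper does not actually prove this proposition: it is quoted as Proposition 5.15 of \cite{cashen2017} and used as a black box, so there is no in-paper proof to compare your argument against. Judged on its own terms, your outline follows the same general route as the original source: verify the axioms for $\mathcal{T}$ using the monotonicity $U(\zeta,r')\subseteq U(\zeta,r)$ for $r'\geq r$, show $X$ is open and dense (your argument via $\alpha^{\zeta}(t)\in\widehat U(\zeta,r)$ for $t\geq r$, and the observation that a geodesic from $o$ to a point near $x$ stays in a ball of radius about $d(o,x)$, are fine), and reduce first countability, Hausdorffness and the identification of the induced boundary topology to the statement that each $\widehat U(\zeta,r)$ is a genuine $\mathcal{T}$-neighborhood of each of its points.

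However, that last statement --- your ``neighborhood-basis step'' --- is precisely where the content of the proposition lies, and you only describe what it should say (``choosing $r'$ large compared with $r$ plus these Morse and Hausdorff constants then propagates the defining inequality''); you do not prove it. The difficulty is the universal quantifier over all $(L,A)$ and all continuous $(L,A)$-quasi-geodesics: the constant $k(\rho_{\zeta},L,A)$ is unbounded in $L$ and $A$, so knowing that quasi-geodesics to, or representing, a point $p\in\widehat U(\zeta,r')$ fellow-travel $\alpha^{\zeta}$ to depth $r'$ does not immediately give the level-$r$ inequality for quasi-geodesics with much larger constants issuing from $o$ toward points near $p$ (or toward boundary points in $U(p,r'')$); one must construct comparison quasi-geodesics through $p$, control their constants, and use contraction estimates in the style of Lemma \ref{CM 4.6} --- this occupies several lemmas in \cite{cashen2017}, and it is the same kind of work this paper redoes in Lemma \ref{Convergence lemma}. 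The gap propagates: first countability, the $X$-versus-boundary and boundary-versus-boundary separations, and the converse inclusion in your induced-topology argument (openness of $V=\bigcup_{\zeta}\widehat U(\zeta,r_{\zeta})$ at its points of $X$) all invoke the unproved step. So your proposal is a correct road map whose essential estimate is deferred rather than established.
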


\begin{definition}
(Limit set, $\Lambda(G)$): \label{def}  If $G$ is a finitely generated group acting
properly discontinuously
on a proper geodesic metric space $X$ with basepoint $o$ we define the $limit\ set\
\Lambda(G) :=\overline{Go} \backslash Go$,
the topological frontier of orbit of $o$ under the $G$-action in
$\bar{X}$.\end{definition}

\begin{definition}(Definition 3.11 of \cite{MCordes2017},
Asymptotic and Bi-asymptotic geodesics)  Let
$\gamma: (-\infty,\infty)\rightarrow X$ be a bi-infinite geodesic in $X$ with $\gamma(0)$
a closest point to basepoint $o$ along $\gamma$. Let $\zeta$ $\in \partial_{c}X$. We say $\gamma$ is \textit{forward asymptotic} to $\zeta$ if for any
contracting geodesic ray $\gamma^{\zeta}:[0,\infty) \rightarrow X$ representing $\zeta$ with $\gamma^{\zeta}(0)=o$, there exists $K>0$ such that
$$d_{Haus}(\gamma([0,\infty)),\gamma^{\zeta}([0,\infty))) < K$$
We define \textit{backwards asymptotic} similarly. If $\gamma$ is both forward and backward asymptotic to $\zeta$
and $\eta$ respectively, then we say $\gamma$ is bi-asymptotic to the ordered pair $(\zeta,\eta)$. If $\gamma$ is bi-asymptotic to $(\zeta,\eta)$ then we will call $\gamma$ to be bi-infinite geodesic joining $\zeta,\eta$
and we denote it by $[\zeta,\eta]$.

\end{definition}

 The following lemma says that any two points of contracting boundary can be joined by a contracting bi-infinite geodesic.
 See sections 3.3 and 3.4 of \cite{MCordes2017} for a detailed discussion about limiting geodesics.
 \begin{lemma}(Lemma 3.4 of \cite{MCordes2017})
  Let $X$ be proper geodesic metric space and $\zeta$, $\eta$ be two distinct points in $\partial_{c}X$. Then there exists a contracting
  bi-infinite geodesic joining $\zeta$ and $\eta$.
\end{lemma}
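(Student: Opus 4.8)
The plan is to produce the bi-infinite geodesic $[\zeta,\eta]$ as a subsequential limit of geodesic segments whose endpoints march off to infinity along fixed contracting representatives of $\zeta$ and $\eta$. First I would fix contracting geodesic rays $\alpha\in\zeta$ and $\beta\in\eta$ based at $o$; replacing $\rho_\zeta$ and $\rho_\eta$ by a common sublinear function and invoking Lemma \ref{Clemma} for subsegments, I may assume that $\alpha$, $\beta$, and all their subsegments are $\rho$-contracting for one fixed sublinear $\rho$. Since $X$ is geodesic, for each $n\in\mathbb N$ I choose a geodesic segment $\gamma_n$ from $\alpha(n)$ to $\beta(n)$ and reparametrise it by arclength as $\gamma_n\colon[-a_n,b_n]\to X$ with $\gamma_n(b_n)=\alpha(n)$, $\gamma_n(-a_n)=\beta(n)$, and $\gamma_n(0)$ a point of $\gamma_n$ closest to $o$.

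The heart of the argument, and the step I expect to be the main obstacle, is to show that there is a constant $D\geq 0$ with $d(o,\gamma_n(0))\leq D$ for all large $n$; that is, the connecting segments cannot escape every bounded neighbourhood of $o$. To do this I would use that $\alpha|_{[0,n]}$ is uniformly Morse (Theorem \ref{Cont}), so that the geodesic triangle with vertices $o$, $\alpha(n)$, $\beta(n)$ is uniformly slim: if $d(o,\gamma_n(0))\to\infty$ along a subsequence, slimness forces an initial subsegment of $\alpha$ of length tending to infinity to remain uniformly close to $\beta$, and then Lemma \ref{Keyprop1} makes $\alpha$ and $\beta$ asymptotic, contradicting $\zeta\neq\eta$. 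Lemma \ref{CM 4.6} can be used in place of the slim-triangle input to extract the same control. This is the delicate point, since it is precisely where the hypothesis $\zeta\neq\eta$ must be fed into the contracting geometry.

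Granting non-escape, I would observe that $d(\gamma_n(0),\alpha(n))\geq n-D$ and $d(\gamma_n(0),\beta(n))\geq n-D$ both tend to infinity, so $a_n,b_n\to\infty$; and that, since $\gamma_n$ has $\alpha(n)$ as an endpoint and passes within $D$ of $o=\alpha(0)$, Lemma \ref{Keyprop1} applied to $\alpha$ and the subarc $\gamma_n|_{[0,b_n]}$ (coarsening constants to absorb $D$) shows $\gamma_n|_{[0,b_n]}$ fellow-travels $\alpha([0,n])$ within a bound independent of $n$, and symmetrically $\gamma_n|_{[-a_n,0]}$ fellow-travels $\beta([0,n])$ within a uniform bound. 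Since $X$ is proper, $\{\gamma_n(0)\}$ lies in a compact ball, so by Arzel\`a--Ascoli a subsequence of $(\gamma_n)$ converges uniformly on compacta to a geodesic line $\ell\colon\mathbb R\to X$ with $\ell(0)$ a closest point of $\ell$ to $o$. The uniform Hausdorff estimates pass to the limit, giving $d_{Haus}(\ell([0,\infty)),\alpha)<\infty$ and $d_{Haus}(\ell((-\infty,0]),\beta)<\infty$; hence $\ell$ is forward asymptotic to $\zeta$ and backward asymptotic to $\eta$, and by Lemma \ref{lemma1} each of the rays $\ell|_{[0,\infty)}$ and $\ell|_{(-\infty,0]}$ is $\rho'$-contracting for some sublinear $\rho'\asymp\rho$. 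Thus $\ell$ is the desired contracting bi-infinite geodesic joining $\zeta$ and $\eta$. Everything after the non-escape estimate is a routine properness/limiting argument resting on the fellow-traveling lemmas already quoted.
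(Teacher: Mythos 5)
The paper does not actually prove this lemma: it is quoted verbatim from Cordes--Durham \cite{MCordes2017} (Lemma 3.4 there), so there is no internal proof to compare against. Your argument is the standard limiting-geodesic construction and is essentially the one in the cited source: fix contracting rays $\alpha\in\zeta$, $\beta\in\eta$ at $o$, join $\alpha(n)$ to $\beta(n)$ by geodesics $\gamma_n$, show the $\gamma_n$ cannot escape every ball about $o$ (this is where $\zeta\neq\eta$ enters), extract a limit line by Arzel\`a--Ascoli using properness, and transfer the uniform fellow-travelling bounds to the limit, with Lemma \ref{lemma1} giving that each half of the limit is contracting. You correctly identified the non-escape estimate as the crux, and the post-escape part of your write-up (the $a_n,b_n\to\infty$ estimate, the use of Lemma \ref{Keyprop1} after prepending $[o,\gamma_n(0)]$ to absorb the constant $D$, and the passage to the limit) is routine and fine.

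Two points in the crucial step are asserted rather than justified. First, the ``uniform slimness'' of the triangles $(o,\alpha(n),\beta(n))$ is itself a non-trivial fact: it does not follow formally from Theorem \ref{Cont}, because the concatenation $\beta([0,n])\cup\gamma_n$ is not a quasi-geodesic with endpoints on $\alpha$, so Morseness of $\alpha|_{[0,n]}$ does not apply directly. It is true (it is essentially Cordes' thin-triangle lemma for Morse sides in \cite{Morse2015}, or can be derived from contraction via a projection argument: the projection of $\beta$ to $\alpha$ is bounded since $\zeta\neq\eta$, and a geodesic whose endpoints project far apart on a contracting geodesic must pass uniformly close to the intermediate points of $\alpha$), but in a self-contained proof this is the step that needs an argument or a precise citation; your fallback suggestion that Lemma \ref{CM 4.6} replaces it does not work as stated, since that lemma requires a quasi-geodesic emanating from $o$, which is exactly what the path $\beta([0,n])\cup\gamma_n$ fails to be. Second, once slimness produces points $\alpha(s_n)$ with $s_n\to\infty$ and $d(\alpha(s_n),\beta)\leq 2\delta$, you cannot immediately quote Lemma \ref{Keyprop1}, whose hypothesis demands closeness at most $k(\rho,L,A)$, which $2\delta$ may exceed; the clean fix is to note that $\alpha([0,s_n])$ followed by a geodesic of length at most $2\delta$ to $\beta$ is a $(1,4\delta)$-quasi-geodesic with endpoints on $\beta$, so the Morse property of $\beta$ (Theorem \ref{Cont}) bounds the Hausdorff distance of arbitrarily long initial segments and forces $\alpha,\beta$ asymptotic, contradicting $\zeta\neq\eta$. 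With these two repairs your outline is a complete and correct proof, in the same spirit as the argument in \cite{MCordes2017}.
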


 \begin{theorem}\label{Mtheorem}
    Let $X$ be a proper geodesic metric space with $\partial_{c}X\neq \emptyset$.
Consider a sequence of  geodesics $\{\gamma_{n}\}$  with end points $\zeta_{n},\eta_{n}\in X\cup \partial_{c}X$.
     Suppose  $\zeta_{n}\rightarrow \zeta,\eta_{n}\rightarrow \eta$
in the topology of fellow travelling quasi-geodesics and $\zeta \neq \eta$. Then $\gamma_{n}$ passes through a bounded set.
    \end{theorem}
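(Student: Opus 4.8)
The plan is to argue by contradiction: suppose the geodesics $\gamma_n$ do not pass through a bounded set. Since each $\gamma_n$ has endpoints $\zeta_n, \eta_n$ converging to distinct limits $\zeta \neq \eta$ in $\partial_c^{\mathcal{FQ}} X$, I would first fix contracting geodesic rays $\alpha^\zeta \in \zeta$ and $\alpha^\eta \in \eta$ based at $o$, with contraction gauges $\rho_\zeta, \rho_\eta$. By the definition of the neighborhood basis, for every $r \geq 1$ there is $n_0$ such that for $n \geq n_0$ the endpoints (or the ray representatives $\beta_n, \beta_n'$ of $\gamma_n$ from $o$) fellow-travel $\alpha^\zeta \cap N_r^c o$ and $\alpha^\eta \cap N_r^c o$ respectively, up to the constants $k(\rho_\zeta, L, A)$ and $k(\rho_\eta, L, A)$ for the relevant quasi-geodesic constants. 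So eventually $\gamma_n$ comes uniformly close to a point far out on $\alpha^\zeta$ and to a point far out on $\alpha^\eta$.

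Next I would exploit contraction. If $\gamma_n$ escapes every bounded set, then the point $x_n$ on $\gamma_n$ closest to $o$ satisfies $d(o, x_n) \to \infty$. Consider the two subrays of $\gamma_n$ emanating from $x_n$: one is a contracting quasi-geodesic fellow-traveling $\alpha^\zeta$ (it must be, since its ideal endpoint is $\zeta$ and, by Theorem~\ref{Cont}/Lemma~\ref{Keyprop1}, asymptotic rays to a contracting ray are uniformly Hausdorff-close to it), the other fellow-travels $\alpha^\eta$. I would then apply the closest-point projection $\pi_{\alpha^\zeta}$. Since $\gamma_n$ passes uniformly close to $\alpha^\zeta$ at large distance but its closest point $x_n$ to $o$ is far from $o$, while $\alpha^\zeta(0) = o$, the contraction property forces the projection of $x_n$ to $\alpha^\zeta$ to be far out; similarly the projection to $\alpha^\eta$ is far out. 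But $x_n$ lies close to both rays near their far ends, which are the two ``ends'' of an ostensible bi-infinite geodesic through $\zeta$ and $\eta$ — and by Lemma~\ref{Keyprop1} applied twice (once for the $\zeta$-subray against $\alpha^\zeta$, once for the $\eta$-subray against $\alpha^\eta$) the whole of $\gamma_n$ would then be forced to be Hausdorff-close to $\alpha^\zeta \cup \alpha^\eta$. Concretely: pick $x \in \alpha^\zeta$ with $d(x,o)$ large and $d(x, \gamma_n)$ small — Lemma~\ref{CM 4.6} gives a point $y$ on the $o$-to-$x$ segment of $\alpha^\zeta$ with $d(o,y) \geq d(x,o) - 2J - \lambda$; doing the symmetric thing on $\alpha^\eta$ pins $\gamma_n$ to within bounded distance of $o$ itself, because a bi-infinite geodesic whose two halves track $\alpha^\zeta$ and $\alpha^\eta$ must have its $o$-closest point comparable to the $o$-closest points of those two rays, which is $0$.

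More carefully, the cleanest route: choose $r$ large and $n$ large so that $\gamma_n$ comes within $k(\rho_\zeta, L, A)$ of some $\alpha^\zeta(t_\zeta)$ with $t_\zeta \geq r$ and within $k(\rho_\eta, L, A)$ of some $\alpha^\eta(t_\eta)$ with $t_\eta \geq r$. Travel along $\gamma_n$ from the point $p_n$ realizing the first approximation to the point $q_n$ realizing the second; this subsegment of $\gamma_n$ is a geodesic from near $\alpha^\zeta(t_\zeta)$ to near $\alpha^\eta(t_\eta)$. Since $\alpha^\zeta, \alpha^\eta$ are contracting with $\alpha^\zeta(0) = \alpha^\eta(0) = o$, and $\zeta \neq \eta$, the geodesic $[\alpha^\zeta(t_\zeta), \alpha^\eta(t_\eta)]$ passes within a bounded neighborhood of $o$ (this is the standard "thin triangle with contracting sides" estimate — a geodesic between two points far out on two contracting rays from $o$ must come boundedly close to $o$, with the bound depending only on $\rho_\zeta, \rho_\eta$). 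By Lemma~\ref{Keyprop1}, the subsegment of $\gamma_n$ from $p_n$ to $q_n$ is uniformly Hausdorff-close to $[\alpha^\zeta(t_\zeta), \alpha^\eta(t_\eta)]$, hence comes uniformly close to $o$; so $\gamma_n$ meets a fixed bounded ball around $o$ for all large $n$, and after enlarging the ball to swallow the finitely many exceptional $\gamma_n$, we are done.

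\textbf{Main obstacle.} The delicate point is the uniformity of all constants: the quasi-geodesic ray representatives $\beta_n \in \zeta_n$ need not have uniformly bounded $(L,A)$, so one cannot directly invoke the neighborhood-basis condition with fixed constants. I expect the fix is to pass to geodesic representatives (available by the proposition after Definition 3.6, since the $\zeta_n$-rays may be replaced by geodesics, and the $\gamma_n$ themselves are geodesics) and to use that convergence $\zeta_n \to \zeta$ in the $\mathcal{FQ}$-topology, together with $\zeta \neq \eta$, forces the contraction gauges of $\gamma_n$ to be eventually dominated by a single sublinear function — this is where the bulk of the careful bookkeeping with Lemmas~\ref{lemma1}, \ref{Clemma}, \ref{Keyprop1} and \ref{CM 4.6} will go, and where the separation $\zeta \neq \eta$ is essential (it is what prevents $\gamma_n$ from drifting off to infinity while still looking convergent at both ends).
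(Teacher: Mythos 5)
Your overall shape is the same as the paper's (use $\mathcal{FQ}$-convergence of the endpoints to find points of $\gamma_n$ close to far-out points on rays representing $\zeta$ and $\eta$, then use contraction/Morse behaviour to pin $\gamma_n$ near a fixed bounded set), but the step you yourself flag as the main obstacle is exactly the step you have not supplied, and your proposed fix is not the one that works. The condition defining $U(\zeta,r)$ constrains only continuous $(L,A)$-quasi-geodesic rays \emph{based at $o$ and representing $\zeta_n$}; it says nothing directly about the geodesic $\gamma_n$, which is not based at $o$. To transfer the information to $\gamma_n$ through an arbitrary based representative $\beta_n\in\zeta_n$ you would need the Hausdorff-closeness of Lemma \ref{Keyprop1} between $\beta_n$ and the $\zeta_n$-half of $\gamma_n$, and that constant is $k'(\rho_{\zeta_n},L,A)$, governed by the contraction gauge of $\zeta_n$ — which is not uniformly controlled in $n$. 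Your suggested repair, that $\mathcal{FQ}$-convergence forces the gauges of the $\gamma_n$ to be eventually dominated by a single sublinear function, is not proved in your sketch and is dubious: tolerating non-uniform gauges along convergent sequences is precisely what this topology is designed to allow. The paper's device is different and concrete: let $p_n$ be a nearest-point projection of $o$ to $\gamma_n$ and observe that $\alpha_n=[o,p_n]\cup[p_n,\zeta_n]_{\gamma_n}$ is a \emph{continuous $(3,0)$-quasi-geodesic} representative of $\zeta_n$ based at $o$; the $U(\zeta,r)$ condition then applies with the single constant $k(\rho_\zeta,3,0)$ and already involves half of $\gamma_n$ as a subpath. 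One must then still rule out that the resulting close point lies on the $[o,p_n]$ leg rather than on $\gamma_n$ itself; the paper does this by a case analysis against the Morse gauge of a bi-infinite contracting geodesic $\gamma=[\zeta,\eta]$ (which exists by the quoted lemma of Cordes--Durham \cite{MCordes2017}), and the remaining case bounds $d(o,p_n)$ via the two-sided Hausdorff closeness of $[x_n',x_n]_{\gamma_n}$ to a subsegment of $\gamma$ through Theorem \ref{Cont} and Lemma \ref{Clemma}. None of this bookkeeping is present in your outline, and without it the assertion that ``eventually $\gamma_n$ comes uniformly close to a point far out on $\alpha^\zeta$ and on $\alpha^\eta$'' is unjustified.

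A secondary, more minor point: your claim that a geodesic joining far-out points on $\alpha^\zeta$ and $\alpha^\eta$ must come within a distance of $o$ bounded \emph{only in terms of $\rho_\zeta,\rho_\eta$} is false as stated — two contracting rays from $o$ with distinct endpoints can fellow-travel for an arbitrarily long time, so the bound necessarily depends on the pair $(\zeta,\eta)$ (for instance through $d(o,[\zeta,\eta])$). This is harmless for the theorem, since any bound independent of $n$ suffices, but it signals that the ``thin triangle with two contracting sides'' estimate you invoke would itself need a proof (or an appeal to the barycenter lemma of \cite{Mousley}), whereas the paper sidesteps it by anchoring everything to the fixed bi-infinite geodesic $[\zeta,\eta]$ and its Morse gauge.
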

    \begin{proof}     Assume     $\zeta,\eta\in \partial_{c}X$.
    Let $k_{1}=k(\rho_{\zeta},3,0)$, $k_{2}=k(\rho_{\eta},3,0)$ and $k=max\{k_{1},k_{2}\}$.
     Fix a base point $o$ in $X$.
          For points $x,y\in X\cup \partial_{c}X$, we denote $[x,y]$ by a geodesic segment, geodesic ray or bi-infinite geodesic ray joining $x$ and $y$,
depending on whether $x$ and $y$, none, exactly one or both lies in $\partial_{c}X$.
If $\gamma$ is any parametrized path and $x,y \in \gamma$ then $[x,y]_{\gamma}$ stands for the segment of $\gamma$ between $x$ and $y$. \\
    Let $\gamma$ be a bi-infinite geodesic joining $\eta$ and $\zeta$ with contracting function $\rho$.
    Let $p_{n}$ and $p$ be  nearest point projections from $o$ to $\gamma_{n}$ and $\gamma$ respectively. We will prove that the
    sequence $\{d(o,p_{n})\}$ is bounded. \\
    Let $\alpha_{n}:=[o,p_{n}]\cup [p_{n},\zeta_{n}]_{\gamma_{n}}$, $\alpha:=[o,p]\cup [p,\zeta]_{\gamma}$, $\alpha'_{n}:=[o,p_{n}]\cup [p_{n},\eta_{n}]_{\gamma_{n}}$,
    and $\alpha':=[o,p]\cup
[p,\eta]_{\gamma}$. Paths $\alpha_{n}$, $\alpha'_{n}$, $\alpha$ and $\alpha'$ are continuous $(3,0)$ quasi-geodesics. We parametrize them by it's arc length.   \\
Since $\zeta_{n}\rightarrow \zeta,\eta_{n}\rightarrow \eta$ in
\textit{the topology of fellow travelling quasi-geodesics} \eqref{eqn:key1},\eqref{eqn:key2}, for any $r\geq1$ there exists $N(=N(r))$ such that
for all $n\geq N$
\begin{equation}\label{CM eqn}
    d(\alpha_{n},\alpha^{\zeta}([r,\infty)))\leq k \ \ ,\
d(\alpha'_{n},\alpha^{\eta}([r,\infty)))\leq k ,
   \end{equation}
   where $\alpha^{\zeta},\alpha^{\eta}$ are geodesic rays from $o$ representing $\zeta,\eta$ respectively.
   As  $\alpha,\alpha'$ are $(3,0)$ quasi-geodesics, by Proposition \ref{Keyprop1},
   $\alpha^{\zeta}$ and $\alpha$, $\alpha^{\eta}$ and $\alpha'$ lie in a bounded Hausdorff distance, say $M$, of each other.
   Then, as $\alpha, \alpha'$ are $(3,0)$-quasi-geodesics
   $$d(\alpha_n,\alpha([\frac{1}{3}(r-M),\infty)))\leq k+M,d(\alpha'_n,\alpha'([\frac{1}{3}(r-M),\infty)))\leq k+M.$$
   Let $k_1=k+M$ and $r_1=\frac{1}{3}(r-M)$.
   By Lemma \ref{Clemma} and Theorem \ref{Cont}, depending on the contracting function $\rho$ of $\gamma$, there exists a
   Morse gauge say $N_{\gamma}$ such that every subsegment of $\gamma$ is $N_{\gamma}$-Morse.
   We choose $r$ large such that $r_1>2k_1+3d(o,p)+N_{\gamma}(3,2k_{1})+1$.
  Thus, there exists a number $N=N(r)>0$ such that for any $n\geq N$, we have the following:
   \begin{equation}\label{closeness}
    d(\alpha_{n},\alpha ([r_1,\infty)))\leq k_1\ \ ,\
d(\alpha'_{n},\alpha'([r_1,\infty)))\leq k_1
   \end{equation}
   Thus, for $n\geq N$, there exist $x_n\in\alpha_n,y_n\in\alpha ([r_1,\infty)),x'_n\in\alpha'_n,y'_n\in\alpha'([r_1,\infty))$
   such that $d(x_n,y_n)\leq k_1$ and $d(x_n',y_n')\leq k_1$.\\ \\
   \textbf{Case 1:} Suppose $x_n\in[p_{n},\zeta_{n}]_{\gamma_{n}}$ and $x_n'\in [p_{n},\eta_{n}]_{\gamma_{n}}$.

   Path $[y_{n}',x_{n}']\cup [x'_{n},x_{n}]_{\gamma_{n}}\cup[x_{n},y_{n}]$ is $(1,2k_1)$-quasi-geodesic joining $y_{n}'$ and $y_{n}$.
   By Lemma \ref{Clemma}, $[y_{n}',y_{n}]_{\gamma}$ is $\rho'$- contracting, where $\rho' \asymp \rho$.
   By Theorem \ref{Cont}, the Hausdorff distance between $ [x'_{n},x_{n}]_{\gamma_{n}}$
   and $[y_{n}',y_{n}]_{\gamma}$ is  bounded, say it is $M_1$  and this
   bound $M_1$ is independent of $n$ and depends only on the contracting function $\rho$.  Now $p\in [y_{n}',y_{n}]_{\gamma} $,
   then there exists    a point say $z_{n}\in [x_{n}',x_{n}]_{\gamma_n} $  such that $d(p,z_{n})\leq M_1$. As $p_{n}$ is a nearest point projection of $o$ on
   $\gamma_{n}$, $d(o,p_{n})\leq M_1+d(o,p)$. \\ \\
   \textbf{Case 2:} Suppose  $x_{n}\in [o,p_{n}]$ and $x_{n}'\in [p_{n},\eta_{n}]_{\alpha_{n}'}$ for some number $n$.

    Consider the path $[x'_{n},p_{n}]_{\gamma_{n}}\cup [p_{n},x_{n}]_{\alpha'_{n}}$, it is a $(3,0)$-quasi-geodesic.
    As $d(x_{n},y_{n})\leq k_1$ and $d(x_{n}',y_{n}')\leq k_1$, the path
    $[y'_{n},x'_{n}]\cup [x'_{n},p_{n}]_{\gamma_{n}}\cup [p_{n},x_{n}]_{\alpha'_{n}}\cup [x_{n},y_{n}]$ is a $(3,2k_1)$-quasi-geodesic.
     Now $d(p,y_{n})> r_1 - d(o,p)$ and hence $d(y_{n},o)>r_1-2d(o,p)$. As $d(x_{n},y_{n})\leq k_1$, therefore $d(o,p_{n})\geq d(o,x_{n})> r_1-k_1-2d(o,p)$
     and hence $$d(p,p_{n})> r_1-k_1-3d(o,p).$$
     Thus, the $(3,0)$-quasi-geodesic path  $[x'_{n},p_{n}]_{\gamma_{n}}\cup [p_{n},x_{n}]_{\alpha'_{n}}$ lie outside the ball $B(p; r_1-k_1-3d(o,p))$.
     Again, as $d(x_{n},y_{n})\leq k_1$ and $d(x_{n}',y_{n}')\leq k_1$, the path
    $[y'_{n},x'_{n}]\cup [x'_{n},p_{n}]_{\gamma_{n}}\cup [p_{n},x_{n}]_{\alpha'_{n}}\cup [x_{n},y_{n}]$ lie outside the closed ball
    $\bar B(p; r_1-2k_1-3d(o,p))$. This says that the subsegment $\bar{B}(p;r_1-2k_1-3d(o,p))\cap \gamma$ is not $N_{\gamma}$ Morse, a contradiction.
    Thus, Case 2 is not possible. \\  \\
    \textbf{Case 3:} Now assume that $x_{n}\in [o,p_{n}]$ and $x_{n}'\in [o,p_{n}]$ for some number $n$. Then,
    following same argument as in first part of case 2  we get  $$d(p,x_{n})> r_1-k_1-d(o,p)~ \& ~d(p,x_{n}')> r_1-k_1-d(o,p)$$
    The path $[y'_{n},x'_{n}]\cup [x'_{n},x_{n}]_{\alpha_{n}'}\cup [x_{n},y_{n}]$ is a $(1,2k_{1})$ quasi-geodesic and it
    lie outside the closed ball $\bar B(p; r_1-2k_1-3d(o,p))$ which says that the segment $\bar{B}(p;r_1-2k_1-d(o,p))\cap \gamma$ is not $N_{\gamma}$-Morse, a contradiction.\\
    Thus, for all large $n $, $x_n\in[p_{n},\zeta_{n}]_{\gamma_{n}}$ and $x_n'\in [p_{n},\eta_{n}]_{\gamma_{n}}$ and we return to Case 1.\\
    If one of $\zeta$ or $\eta$ is in $X$ then correspondingly one of the end points of $\gamma_n$ lie in a bounded set.
    Hence, for all $n$, $\gamma_n$ passes through a bounded set. Also, note that
       the sequence $\{d(o,p_n)\}$  is bounded  if either $\zeta$ or $\eta$ or both  lie in $X$.

     \end{proof}

    \begin{corollary}\label{MainLemma}
       Let $\{\gamma_{n}\}$ be a sequence of  bi-infinite contracting geodesics with end points $\zeta_{n},\eta_{n}\in \partial_{c}X$.
      Suppose  $\zeta_{n}\rightarrow \zeta,\eta_{n}\rightarrow \eta$, $\zeta \neq \eta$
in the topology of fellow travelling quasi-geodesics. Let $\gamma$ be a bi-infinite contracting geodesic joining $\zeta$ and $\eta$. Then given any $x\in \gamma$, there exists a number $N$ such that for all $n\geq N$, $d(x, \gamma_{n})$ is less than some constant $K$,
where $K$ depends only on contracting function of $\gamma$.
    \end{corollary}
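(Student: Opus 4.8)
I want to prove Corollary~\ref{MainLemma}, which upgrades the conclusion of Theorem~\ref{Mtheorem} ("$\gamma_n$ passes through a bounded set") to a uniform statement: every point of the limiting geodesic $\gamma$ is eventually uniformly close to $\gamma_n$. The strategy is to reuse the machinery built in the proof of Theorem~\ref{Mtheorem} --- in particular the nearest-point projections $p_n$ from $o$ to $\gamma_n$, the approximating $(3,0)$-quasi-geodesics $\alpha_n,\alpha'_n,\alpha,\alpha'$, and the fellow-travelling estimates \eqref{closeness} --- and then run the contracting/Morse comparison one more time, this time anchored at an arbitrary point $x\in\gamma$ rather than at $p$.

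First I would fix $x\in\gamma$. Since $\gamma$ is bi-infinite and $\gamma(0)=p$ is a closest point to $o$, the point $x$ lies on one of the two rays of $\gamma$, say the one limiting to $\zeta$, so $x\in[p,\zeta]_\gamma$ (the other case is symmetric). As in the proof of Theorem~\ref{Mtheorem}, for $r$ large there is $N=N(r)$ such that for $n\geq N$ the estimate \eqref{closeness} gives points $x_n\in\alpha_n$, $y_n\in\alpha([r_1,\infty))$, $x'_n\in\alpha'_n$, $y'_n\in\alpha'([r_1,\infty))$ with $d(x_n,y_n)\leq k_1$ and $d(x'_n,y'_n)\leq k_1$; and by Cases~2 and~3 of that proof, for $n$ large we may assume $x_n\in[p_n,\zeta_n]_{\gamma_n}$ and $x'_n\in[p_n,\eta_n]_{\gamma_n}$, i.e.\ we are in Case~1. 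In Case~1 we showed that $[y'_n,x'_n]\cup[x'_n,x_n]_{\gamma_n}\cup[x_n,y_n]$ is a $(1,2k_1)$-quasi-geodesic joining $y'_n$ and $y_n$, both of which lie on $\gamma$ (since $y_n\in\alpha([r_1,\infty))\subseteq[p,\zeta]_\gamma$ and similarly $y'_n\in[p,\eta]_\gamma$), and hence by Lemma~\ref{Clemma} and Theorem~\ref{Cont} the Hausdorff distance between $[x'_n,x_n]_{\gamma_n}$ and $[y'_n,y_n]_\gamma$ is bounded by a constant $M_1$ depending only on the contracting function $\rho$ of $\gamma$. Now I just need $x$ to lie on the segment $[y'_n,y_n]_\gamma$: by choosing $r$ (hence $r_1$) large enough --- depending on the fixed $x$ via $d(p,x)$ --- I can force $y_n$ to lie on $\gamma$ strictly beyond $x$ on the $\zeta$-side (since $d(p,y_n)>r_1-d(o,p)$ grows with $r$), while $y'_n$ lies on the $\eta$-side of $p$, so the subsegment $[y'_n,y_n]_\gamma$ of $\gamma$ contains $p$ and extends past $x$, hence contains $x$. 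Then there is a point $z_n\in[x'_n,x_n]_{\gamma_n}\subseteq\gamma_n$ with $d(x,z_n)\leq M_1$, and we take $K=M_1$.

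The only subtlety --- and the step I'd flag as the main obstacle --- is making the quantifiers line up correctly: $N$ is allowed to depend on $x$, but $K$ must not. The choice of $r_1$ (and therefore of $N$) depends on $d(p,x)$, which is fine for the statement; but I must check that the Hausdorff bound $M_1$ produced by Theorem~\ref{Cont} applied to the $(1,2k_1)$-quasi-geodesic comparison genuinely depends only on $\rho$ and on the fixed quasi-geodesic constants $(1,2k_1)$ --- and $k_1=k+M$ is itself pinned down by $\rho_\zeta$, $\rho_\eta$ and the Hausdorff constant $M$ between $\alpha^\zeta,\alpha$ (resp.\ $\alpha^\eta,\alpha'$) from Lemma~\ref{Keyprop1}, none of which move with $x$ or with $n$. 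So $M_1$ is a genuine constant of the configuration $(\gamma,\zeta,\eta)$, and since in the corollary $\gamma$ is given as \emph{the} bi-infinite contracting geodesic joining $\zeta$ and $\eta$, $M_1$ depends only on the contracting function of $\gamma$, exactly as claimed. The rest is bookkeeping already carried out in the proof of Theorem~\ref{Mtheorem}, so the corollary follows with essentially no new ideas --- it is a matter of re-running Case~1 with the anchor point moved from $p$ to $x$ and enlarging $r$ accordingly.
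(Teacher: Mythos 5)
Your proposal is correct and follows essentially the same route as the paper: re-run the Case~1 machinery from Theorem~\ref{Mtheorem} with $r_1$ enlarged (the paper takes $r_1>\max\{d(o,p)+d(p,x),\,2k_1+3d(o,p)+N_{\gamma}(3,2k_{1})+1\}$) so that $x$ lies on the subsegment $[y'_n,y_n]_{\gamma}$, use boundedness of $\{d(o,p_n)\}$ (equivalently, the impossibility of Cases~2 and~3) to land in Case~1 for large $n$, and conclude via the Hausdorff bound between $[x'_n,x_n]_{\gamma_n}$ and $[y'_n,y_n]_{\gamma}$, which depends only on the contracting function of $\gamma$. Your quantifier bookkeeping ($N$ depends on $x$, $K$ does not) matches the paper's argument.
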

\begin{proof}
     Let $p_{n}$ and $p$ be a nearest point projections from $o$ to $\gamma_{n}$ and $\gamma$ respectively.
     We use the notations of Theorem \ref{Mtheorem}. Take $r_1> \max\{d(o,p)+d(p,x), 2k_1+3d(o,p)+N_{\gamma}(3,2k_{1})+1 \}$.
     There exists a number $N_1>0$ depending on $r$ such that for all $n\geq N_1$
    \begin{equation*}
    d(\alpha_{n},\alpha ([r_1,\infty)))\leq k_1 \ \ ,\
d(\alpha'_{n},\alpha'([r_1,\infty)))\leq k_1
   \end{equation*}
   Thus, for $n\geq N_1$, there exist $x_n\in\alpha_n,y_n\in\alpha ((r_1,\infty)),x'_n\in\alpha'_n,y'_n\in\alpha'((r_1,\infty))$
   such that $d(x_n,y_n)\leq k_1$ and $d(x_n',y_n')\leq k_1$.  We have proved in Theorem \ref{Mtheorem}
    that the sequence $\{d(o,p_{n})\}$ is bounded. Thus, there exists a number $N_2>0$ such that
    for all $n\geq N_2$, $x_n,x'_n$ does not lie in $[0,p_n]$. Let $N=\max\{N_1,N_2\}$. For all $n\geq N$, the geodesic $[x_n',x_n]_{\gamma_n}$
    lie in a bounded $K$-Hausdorff distance from $[y'_n,y_n]_{\gamma}$, where $K$ eventually depends only on contracting function of $\gamma$.
    Note that by the choice of $r_1$,
    $x\in [y'_n,y_n]_{\gamma}$. Thus, $d(x,\gamma_n)\leq d(x,[x_n',x_n]_{\gamma_n})\leq K$ for all $n\geq N$.
\end{proof}

\begin{lemma}\label{Convergence lemma} Let $X$ be proper geodesic metric space with non empty
contracting boundary and $o$ be a fixed base point in $X$.
Suppose $\{\zeta_{n}\}$ is a sequence in $\partial_{c}X$ converging to $\zeta$ in
the topology of fellow travelling quasi-geodesics. Suppose for each $n$,
 there exists a continuous
$(3,0)$-quasi-geodesic $\beta_n$ starting from $o$ and
representing  $\zeta_{n}$ such that $\beta_n$ is the concatenation of a geodesic $[o,p_n]$ and a geodesic ray $\nu_n$. Assume also that the sequence
$\{d(o,p_n)\}$ is bounded. Let $q_n$ be a point on
$\beta_{n}$ such that $d(o,q_n)\rightarrow \infty$. Then the following holds:\\
(i) Given any $r\geq 1$ there exists $N=N(r)$ such that for all $n\geq N$,
$$d(\gamma_n, \alpha^{\zeta}([r,\infty)))\leq k'(\rho_{\zeta},2L+1,A)$$
where $\gamma_n$ is any  continuous $(L,A)$-quasi-geodesic with end points $o$ and $q_{n}$ and  $\alpha^{\zeta}$ is a contracting geodesic representing $\zeta$ starting from the base point $o$.\\
(ii) The sequence $\{q_{n}\}$ converges to $\zeta$.
\end{lemma}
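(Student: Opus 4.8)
\emph{Strategy and the auxiliary ray.} The plan is to read the hypothesis $\zeta_n\to\zeta$ through the basic neighbourhood $U(\zeta,r)$ of $\zeta$, applied not to $\gamma_n$ itself but to a quasi-geodesic \emph{ray} representing $\zeta_n$ that I build out of $\gamma_n$ and the tail of $\beta_n$ past $q_n$, and then to push the resulting closeness to $\alpha^\zeta$ back onto $\gamma_n$ using that $\alpha^\zeta$ is $\rho_\zeta$-contracting (via Lemma \ref{Keyprop1}). Since the $\beta_n$ need not be uniformly Morse, every constant must be read off from $\rho_\zeta$, $L$, $A$ and $D:=\sup_n d(o,p_n)$ alone, never from the individual contraction functions of the $\beta_n$. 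Concretely: as $d(o,q_n)\to\infty$ while $d(o,p_n)\le D$, for all large $n$ the point $q_n$ lies on the geodesic-ray part $\nu_n$ of $\beta_n$; I let $\mu_n:=\nu_n|_{[q_n,\infty)}$ (a geodesic ray from $q_n$, and a subray of $\beta_n$ sitting beyond $q_n$) and put $\widehat\gamma_n:=\gamma_n\cup\mu_n$. Because $\widehat\gamma_n$ agrees with $\beta_n$ along $\mu_n$ it is asymptotic to $\beta_n$, hence represents $\zeta_n$; and a routine triangle-inequality estimate (using that $\gamma_n$ is $(L,A)$-quasi-geodesic, that $\mu_n$ is geodesic and moves away from $o$, and that $d(o,p_n)\le D$) shows $\widehat\gamma_n$ is a continuous $(2L+1,A)$-quasi-geodesic ray \emph{provided $\gamma_n\subseteq B(o;d(o,q_n)+A+2D)$}. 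If $\gamma_n$ leaves that ball, the offending point is itself at distance $\ge r+k(\rho_\zeta,2L+1,A)$ from $o$ for large $n$, so it either already certifies the desired conclusion (when it is within $k(\rho_\zeta,2L+1,A)$ of $\alpha^\zeta$) or can be dealt with by a reduction to a taut subpath of $\gamma_n$; I treat only the stated case, flagging this as the one genuinely delicate point below.

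\emph{Proof of (i).} Fix $r\ge 1$ and take $n$ so large that $\zeta_n\in U(\zeta,r)$ and $d(o,q_n)\ge r+k'(\rho_\zeta,2L+1,A)$. Applied to the $(2L+1,A)$-quasi-geodesic ray $\widehat\gamma_n\in\zeta_n$, the inclusion $\zeta_n\in U(\zeta,r)$ gives a point $w_n\in\widehat\gamma_n$ and some $t_n\ge r$ with $d\bigl(w_n,\alpha^\zeta(t_n)\bigr)\le k(\rho_\zeta,2L+1,A)$. If $w_n\in\gamma_n$, then since $\alpha^\zeta(t_n)\in\alpha^\zeta([r,\infty))$ I conclude at once $d(\gamma_n,\alpha^\zeta([r,\infty)))\le k(\rho_\zeta,2L+1,A)\le k'(\rho_\zeta,2L+1,A)$. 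If $w_n\in\mu_n$, then $w_n$ lies on $\beta_n$ at a parameter past that of $q_n$; I apply Lemma \ref{Keyprop1} to the $\rho_\zeta$-contracting geodesic $\alpha^\zeta$ and to $\beta_n$ (a $(3,0)$-, hence $(2L+1,A)$-, quasi-geodesic, based at $o$ like $\alpha^\zeta$): since $d(w_n,\alpha^\zeta(t_n))\le k(\rho_\zeta,2L+1,A)$, the Hausdorff distance between $\alpha^\zeta[0,t_n]$ and the arc of $\beta_n$ from $o$ to $w_n$ is $\le k'(\rho_\zeta,2L+1,A)$. As $q_n$ lies on that arc, $q_n$ is within $k'(\rho_\zeta,2L+1,A)$ of $\alpha^\zeta(u)$ for some $u\ge d(o,q_n)-k'(\rho_\zeta,2L+1,A)\ge r$, and since $q_n$ is the endpoint of $\gamma_n$ I again obtain $d(\gamma_n,\alpha^\zeta([r,\infty)))\le k'(\rho_\zeta,2L+1,A)$. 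Either way (i) holds.

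\emph{Proof of (ii).} Fix $r\ge 1$ and an arbitrary continuous $(L,A)$-quasi-geodesic segment $\gamma$ from $o$ to $q_n$. I apply (i) with $r_1:=r+2k'(\rho_\zeta,2L+1,A)+\lambda(\rho_\zeta,L,A)$ to produce, for large $n$, a point $x\in\alpha^\zeta$ with $d(o,x)\ge r_1$ and $d(x,\gamma)\le k'(\rho_\zeta,2L+1,A)$. Feeding this into Lemma \ref{CM 4.6} (with $\alpha=\alpha^\zeta$, $\beta=\gamma$, $R=r_1$, $J=k'(\rho_\zeta,2L+1,A)$) yields $y\in\alpha^\zeta$ with $d(y,\gamma)=k(\rho_\zeta,L,A)$ and $d(o,y)\ge r_1-2J-\lambda(\rho_\zeta,L,A)=r$, so $d(\gamma,N^c_r o\cap\alpha^\zeta)\le k(\rho_\zeta,L,A)$. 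As $\gamma$ was arbitrary, $q_n\in\widehat U(\zeta,r)$ for all large $n$, which is exactly the assertion $q_n\to\zeta$ in $\overline X$.

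\emph{Where the difficulty lies.} The heart of the argument is the transfer step in the proof of (i): with no uniform Morse control on the $\beta_n$, one cannot compare $\gamma_n$ with $\beta_n|_{[o,q_n]}$ directly, so everything is funnelled through the single $\rho_\zeta$-contracting geodesic $\alpha^\zeta$ by promoting $\gamma_n$ to the ray $\widehat\gamma_n\in\zeta_n$ and re-invoking $\zeta_n\to\zeta$. Checking that $\widehat\gamma_n$ genuinely is a $(2L+1,A)$-quasi-geodesic — that is, bounding how far an arbitrary $(L,A)$-quasi-geodesic $\gamma_n$ can overshoot its endpoint $q_n$ — is the subtle point, and it is precisely where the special shape of $\beta_n$ (a geodesic followed by a geodesic ray, with a uniformly bounded "corner") together with the hypotheses $d(o,p_n)\le D<\infty$ and $d(o,q_n)\to\infty$ are all used.
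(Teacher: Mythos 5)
Your overall architecture is the same as the paper's (promote $\gamma_n$ to a quasi-geodesic ray in the class $\zeta_n$, apply the defining condition of $U(\zeta,r)$ with constants $(2L+1,A)$, transfer back to $\gamma_n$ via Lemma \ref{Keyprop1}, then get (ii) from Lemma \ref{CM 4.6}), but the step you flag and then skip is precisely the content of the paper's proof, and it cannot be skipped. An $(L,A)$-quasi-geodesic from $o$ to $q_n$ may a priori wander out to distance roughly $L^{2}d(o,q_n)$ from $o$, so the containment $\gamma_n\subseteq B(o;d(o,q_n)+A+2D)$ can genuinely fail; in that case $\widehat\gamma_n=\gamma_n\cup\mu_n$ need not be a quasi-geodesic ray at all, you cannot feed it into $\zeta_n\in U(\zeta,r)$, and your two fallbacks are not arguments: there is no reason the far-out point of $\gamma_n$ should lie within $k(\rho_\zeta,2L+1,A)$ of $\alpha^{\zeta}$ (that is exactly what is being proved), and the ``reduction to a taut subpath'' is left entirely unexplained. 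The paper's Claim resolves exactly this by a different splicing: take $\gamma_n(t_m)$ a closest point of $\gamma_n$ to the tail $\beta_n|_{[m,\infty)}$, join it to $\beta_n$ by a geodesic realizing that distance (such a concatenation is automatically a continuous $(2L+1,A)$-quasi-geodesic, with no ball hypothesis and no dependence on $D$), pass to a limit ray $\gamma_n|_{[0,u_n]}\ast\mu$ by Arzel\`a--Ascoli, and record the inequality $d(o,q_n)\le 2d(o,p_n)+d(o,\gamma_n(u_n))$, which is what guarantees the truncation point $\gamma_n(u_n)$ still escapes to infinity so the conclusion can be transferred through it via Lemma \ref{Keyprop1}. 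Note also that even in your ``in-ball'' case the corner at $q_n$ costs an additive constant of order $A+4D$, so the concatenation is only a $(2L+1,A+4D+O(A))$-quasi-geodesic and the bound in (i) would not come out as the stated $k'(\rho_\zeta,2L+1,A)$.

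There is a second gap in your part (ii): your threshold ``for large $n$'' depends on $(L,A)$, because you apply (i) at the radius $r_1=r+2k'(\rho_\zeta,2L+1,A)+\lambda(\rho_\zeta,L,A)$, which is unbounded as $(L,A)$ grows; but $q_n\in\widehat{U}(\zeta,r)$ demands a single $n$ beyond which the estimate holds simultaneously for \emph{every} $L\ge 1$, $A\ge 0$ and every continuous $(L,A)$-quasi-geodesic from $o$ to $q_n$, so ``as $\gamma$ was arbitrary, $q_n\in\widehat{U}(\zeta,r)$ for all large $n$'' swaps the quantifiers. The paper closes this by observing that the condition is vacuous unless $\kappa(\rho_\zeta,L,A)<r$, which forces $L<\sqrt{r/3}$ and $A<r/3$, and then taking the supremum $R_1$ of $2\kappa'(\rho_\zeta,2L+1,A)+\lambda(\rho_\zeta,L,A)$ over this bounded range, so that one radius $R=r+R_1$, hence one $N$, serves all admissible $(L,A)$ at once. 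You would need to add this reduction (and also justify the existence of the ``last point $y$ with $d(y,\gamma)=\kappa(\rho_\zeta,L,A)$'', or treat separately the case where $x$ itself is already $\kappa$-close to $\gamma$) before your (ii) is complete.
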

\begin{proof}(i) We first prove the following claim:\\
\textit{Claim}: Let  $\eta \in\partial_{c}X$ and $\beta$ be a continuous $(3,0)$-quasi-geodesic ray representing $\eta$
 such that $\beta$ is concatenation of a geodesic $[o,p]$ and a geodesic ray $\nu $ starting from $p\in X$. Then $\beta$ is contracting as it represents an element of the contracting boundary $\partial_{c}X$. We parametrize $\beta:[0,\infty)\to X$ by arc length
 where $\beta(0)=o$.  Consider a point $q\in\beta[0,\infty)$ which lies in $\nu$. Let $\gamma$ be any continuous $(L,A)$-quasi-geodesic joining $\beta(0)=o$ and $q$. Then there exists a $(2L+1,A)$-quasi-geodesic ray of the form $\gamma|_{[0,u]}*\mu$, where $\gamma(u)\in\gamma$ and $\mu$
 is a geodesic ray starting from $\gamma(u)$, such that $\gamma|_{[0,u]}*\mu$ is asymptotic to $\beta$ and $d(o,q)\leq 2d(o,p)+ d(o,\gamma(u))$. \\ \\
\textit{Proof of claim: }
 For all  $n$, there exist points $\gamma(t_n)\in\gamma$
 and $\beta(s_n)\in\beta([n,\infty))$ such that $d(\gamma,\beta|_{[n,\infty})=d(\gamma(t_n),\beta(s_n))$.
 If $\mu_n$ is a geodesic between $\gamma(t_n)$ and $\beta(t_n)$
 then $\gamma|_{[0,t_n]}*\mu_n$ is a $(2L+1,A)$-quasi-geodesic. An application of Arzela-Ascoli theorem tells that
 there exists a quasi-geodesic ray of the form $\gamma|_{[0,u]}*\mu$ where $\gamma(u)\in\gamma$ and $\mu$
 is a geodesic ray starting from $\gamma(u)$. Each $\gamma|_{[0,t_n]}*\mu_n$ lie $R$-neighborhood of $\beta$, where $R$
 depends on $L,A$ and the contraction function of $\beta$. Thus, $\gamma|_{[0,u]}*\mu$ also lie in the $R$-neighborhood
 of $\beta$ and hence it is asymptotic to $\beta$. Note that
 $d(p,q)=d(p,\beta(s_n))-d(q,\beta(s_n))\leq d(p,\beta(s_n))-d(\gamma(t_n),\beta(s_n))\leq d(p,\gamma(t_n))$.
   Thus, for all $n$, we have  $$d(o,q)\leq d(o,p)+d(p,q) \leq d(o,p)+d(p,\gamma(t_n))\leq2d(o,p)+ d(o,\gamma(t_n)).$$
Taking $n\to\infty$, we get $d(o,q)\leq 2d(o,p)+ d(o,\gamma(u))$. Hence, we have proved the claim.\\ \\
Let $\gamma_n:[0,s_n]\to X$ represents  the arc length re-parametrization  of the quasi-geodesic $\gamma_n$.
For each $n$, from the Claim it follows that there exists a $(2L+1,A)$-quasi-geodesic ray of the form ${\gamma_n}|_{[0,u_n]}*\mu_n$ asymptotic to $\beta_n$, where $u_n\in [0,s_n]$.
Thus, ${\gamma_n}|_{[0,u_n]}*\mu_n$ also represents $\zeta_n$. $\zeta_n\to\zeta$ implies that
$d({\gamma_n}|_{[0,u_n]}*\mu_n, \alpha^{\zeta}([r,\infty))\leq k(\rho_{\zeta},2L+1,A)$ for all large $n$.
There exists $x_n\in{\gamma_n}|_{[0,u_n]}*\mu_n$  such that $d(x_n,\alpha^{\zeta}([r,\infty))\leq k$.
If $x_n\in\mu_n$ then by Lemma \ref{Keyprop1}, $d(\gamma(u_n),\alpha^{\zeta})\leq k'$, where $k'=k'(\rho_{\zeta},2L+1,A)$ (as in Definition \ref{Morse Constant}).
Now, $d(o,q_n)\leq 2d(o,p_n)+ d(o,\gamma(u_n))$ and $\{d(o,p_n)\}$  bounded implies that
$d(o,\gamma_n(u_n))\to\infty$ as $d(o,q_n)\to\infty$. Thus, for all large $n$, $d(\gamma_n(u_n),\alpha^{\zeta}([r,\infty))\leq k'$.
If $x_n\in\gamma_n([0,u_n))$ then from definition $d(\gamma_n,\alpha^{\zeta}([r,\infty)))\leq k\leq k'$.\\ \\
(ii) Given $r\geq1$, our aim is to find large number $N$ such that for all $n\geq N$, the distance $d(\gamma_n, N^{c}_{r}o\cap \alpha^{\zeta})\leq \kappa(\rho_{\zeta},L,A)$, for any continuous $(L,A)$-quasi-geodesic $\gamma_n$ joining $o$ to $q_{n}$. That will show $q_{n}\rightarrow \zeta$.\par
 If $r\leq\kappa(\rho_{\zeta},L,A)$ then $ d(\alpha^{\zeta}(r),\gamma_n)\leq d(\alpha^{\zeta}(r),o)=r\leq \kappa(\rho_{\zeta},L,A)$ for
 all numbers $n$. Hence, in this case, $d(\gamma_n, N^{c}_{r}o\cap \alpha^{\zeta})\leq \kappa(\rho_{\zeta},L,A)$ for all numbers $n$.

Given $r\geq1$, only interesting pairs $(L,A)$ are those for which $r> \kappa(\rho_{\zeta},L,A)$. Definition \ref{Morse Constant} of $\kappa(\rho_{\zeta},L,A)$ gives $3L^{2},3A\leq \kappa(\rho_{\zeta},L,A)< r$ i.e. $L<\sqrt\frac{r}{3},A<\frac{r}{3}$. Set
 \begin{equation}
 R_{1}:=\underset{L<\sqrt\frac{r}{3},A<\frac{r}{3}}{\sup}\{2\kappa'(\rho_{\zeta},2L+1,A)+\lambda(\rho_{\zeta},L,A)\}< \infty,
 \end{equation}
 where $\lambda$ is the function as in Lemma \ref{CM 4.6}.\\
 Let $R=r+R_1$. From (i) we get a number $N=N(R)$ such that for all $n\geq N$ we have
\begin{equation}
  d(\gamma_n,N^{c}_{R}o\cap \alpha^{\zeta})\leq \kappa'(\rho_{\zeta},2L+1,A)
 \end{equation}
 for every continuous $(L,A)$-quasi-geodesic $\gamma_n$ joining $o$ to $q_{n}$. Note that $N$ is independent of $L,A$. There exist points $z_n\in\gamma_n$ and
 $x_n\in \alpha^{\zeta}([R,\infty))$ such that $d(x_n,z_n)\leq \kappa'(\rho_{\zeta},2L+1,A)$. Let $y_n$ be the last point of $\alpha^{\zeta}$ between $o$ and $\alpha^{\zeta}(x_n)$ such that $d(y_n,\gamma_n)=\kappa(\rho_{\zeta},L,A)$.  From  Lemma \ref{CM 4.6},
 $$d(o,y_n)\geq R-2\kappa'(\rho_{\zeta},2L+1,A)-\lambda(\rho_{\zeta},L,A)\geq R-R_1=r.$$ Thus,
 $d(\gamma_n,N^{c}_{r}o\cap \alpha^{\zeta})\leq \kappa(\rho_{\zeta},L,A)$ for all $n\geq N$.

 \end{proof}

 \section{Approximate Barycenters}
 Let $X$ be a proper geodesic metric space.  Assume
$|\partial_{c}^{\mathcal{F}\mathcal{Q}}X|\geq 3$.
Let $(a,b,c)$ be a distinct triple in $ X\cup \partial_{c}^{\mathcal{F}\mathcal{Q}}X$. By
joining the points $a,b,c$
with geodesics, we have a $\triangle{(a,b,c)}$ whose vertices are
$a,b,c$. The $\triangle{(a,b,c)}$ is called
ideal triangle if $a,b,c\in\partial_{c}^{\mathcal{F}\mathcal{Q}}X$. If the
sides of the triangle $\triangle{(a,b,c)}$
are contracting geodesics, by taking the maximum of contracting functions, we can assume that
the sides of $\triangle{(a,b,c)}$
are $\rho$-contracting for a single sub-linear function $\rho$.
\begin{definition}
(Approximate barycenter of triangles):
Given $\delta\geq0$.
An element $x$ in  $X$ is said to be a $ \delta$-$barycenter$ for
a triangle $\triangle{(a,b,c)}$ in  $X\cup \partial_{c}^{\mathcal{F}\mathcal{Q}}X$
if  the distance of $x$ from the  sides of $\triangle{(a,b,c)}$  is at most $\delta$.
\end{definition}
\begin{lemma}(Lemma 11, Lemma 12 of \cite{Mousley})
Let $\triangle{(a,b,c)}$ be a triangle in $X\cup \partial_{c}^{\mathcal{F}\mathcal{Q}}X$ such that its sides
are $\rho$-contracting geodesics. There exists $\delta=\delta(\rho)\geq 0$
such that the set of $\delta$-barycenters of $\triangle{(a,b,c)}$ is non-empty and its diameter is bounded above by
some constant depending only
  on $\rho$.
 \end{lemma}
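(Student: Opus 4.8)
The plan is to establish non-emptiness by a ``fork point'' construction and the diameter bound via the uniform thinness of triangles with $\rho$-contracting sides, keeping track throughout that every auxiliary constant is a function of $\rho$ alone. Name the sides $\alpha=[b,c]$, $\beta=[c,a]$, $\gamma=[a,b]$. By Theorem~\ref{Cont} the sides are $N$-Morse for a single gauge $N=N(\rho)$, and by Lemma~\ref{Clemma} their subsegments remain uniformly Morse; the quantitative inputs are Definition~\ref{Morse Constant} and Lemmas~\ref{Keyprop1} and \ref{CM 4.6}. I first treat the case $a,b,c\in X$ and at the end replace sides by contracting rays/bi-infinite geodesics and the ``common vertex'' fellow-travelling by the asymptotic clause of Lemma~\ref{Keyprop1} to cover ideal vertices.

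\textbf{Non-emptiness.} Parametrise $\gamma$ and $\beta$ from the common vertex $a$ and set $T:=\sup\{t\ge 0:\,d(\gamma(t),\beta)\le k(\rho,1,0)\}$; this is well defined since $d(\gamma(0),\beta)=0$, and if $T$ is essentially the length of $\gamma$ one finds a barycenter near $b$ directly, so assume not. Letting $t\uparrow T$ and invoking Lemma~\ref{Keyprop1} produces an $s^{*}$ with $d_{\mathrm{Haus}}(\gamma[0,T],\beta[0,s^{*}])\le k'(\rho,1,0)$; put $m:=\gamma(T)$, so $d(m,\gamma)=0$ and $d(m,\beta)\le k'(\rho,1,0)$. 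The key point is that $d(m,\alpha)\le C(\rho)$. To see this, form the path $P$ that runs from $b$ backwards along $\gamma$ to $m$, then by a jump of length $\le k'(\rho,1,0)$ to $\beta(s^{*})$, then forwards along $\beta$ to $c$; by maximality of $T$ every point of the $\gamma$-leg is at distance $>k(\rho,1,0)$ from $\beta$, so by the contraction of $\beta$ (the mechanism behind Definition~\ref{Morse Constant} and Lemma~\ref{CM 4.6}) the closest-point projection of that leg to $\beta$ has diameter $\le C_0(\rho)$ and sits near $\beta(s^{*})$, and a standard consequence is that $P$ is an $(L_0,A_0)$-quasi-geodesic from $b$ to $c$ with $L_0,A_0$ depending only on $\rho$. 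Since $\alpha=[b,c]$ is $N$-Morse, $P$ lies in the $N(L_0,A_0)$-neighbourhood of $\alpha$, and as $m\in P$ this gives $d(m,\alpha)\le N(L_0,A_0)$. Hence $m$ is a $\delta$-barycenter with $\delta:=\delta(\rho)=\max\{k'(\rho,1,0),\,N(L_0(\rho),A_0(\rho))\}$.

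\textbf{Bounded diameter.} Let $x,x'$ be $\delta$-barycenters. For each side $\sigma$ the points $\pi_{\sigma}(x),\pi_{\sigma}(x')$ lie within $2\delta$ of each of the other two sides; choosing $L_{\star}=L_{\star}(\rho)$ large enough that $k(\rho,L_{\star},0)$ dominates $2\delta$ and the constants above, Lemma~\ref{Keyprop1} shows that the $\gamma$-subsegment cut out by $\pi_{\gamma}(x),\pi_{\gamma}(x')$ fellow-travels (within $k'(\rho,L_{\star},0)$) a subsegment of $\beta$ and a subsegment of $\alpha$, hence those subsegments of $\alpha,\beta$ fellow-travel each other along a matching stretch; since $\alpha$ and $\beta$ issue from the common vertex $c$, Lemma~\ref{Keyprop1} forces this stretch to lie in the bounded initial-fellow-travelling region determined by $\rho$, and combined with the description of $m$ above (the ``fork'' of two sides being uniformly close to the third) this pins $\pi_{\beta}(x)$ to within $C_1(\rho)$ of the single point $\beta(s^{*})$. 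Therefore $d(x,\beta(s^{*}))\le C_1(\rho)+\delta$ for every $\delta$-barycenter $x$, so the set of $\delta$-barycenters has diameter at most $2(C_1(\rho)+\delta)$. (For a fixed triangle, boundedness is immediate without estimates: a sequence $x_n\to\infty$ of barycenters would have $\pi_{\sigma}(x_n)\to\infty$ along each side $\sigma$, which is impossible if some side is compact and otherwise forces $x_n$ to be $O(\delta)$-close to an end-ray of each side asymptotic to the corresponding ideal endpoint, whereas no assignment of ideal endpoints to the pairs $\{a,b\},\{b,c\},\{c,a\}$ is consistent with this. The content of the lemma is that the bound is uniform in the triangle.)

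The step I expect to be the main obstacle is the diameter bound, and inside it the uniform thinness statement: that the ``fork'' of two $\rho$-contracting sides at a common vertex lies within $C(\rho)$ of the third side, and that the two forks on any given side coincide up to $C(\rho)$. The contraction inequalities and Lemmas~\ref{Keyprop1}, \ref{CM 4.6} give one direction of this comparison routinely, but the reverse inequality, which amounts to ruling out a long stretch of one side that stays far from the other two, requires running the quasi-geodesic-concatenation argument at all three vertices simultaneously while verifying that every constant produced is a function of $\rho$ only. By comparison the non-emptiness argument and the ideal-vertex reductions are straightforward once this bookkeeping is in place.
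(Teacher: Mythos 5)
Note first that the paper does not prove this statement at all: it is imported verbatim from Lemmas 11 and 12 of \cite{Mousley}. So you are attempting an independent reproof, and it has to stand on its own; your ``fork point'' strategy is reasonable and close in spirit to the arguments in the literature, but as written it has a genuine gap at its core.

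The gap is the assertion, made in the non-emptiness step and relied on again in the diameter step, that because the leg of $\gamma$ beyond $\gamma(T)$ stays at distance greater than $k(\rho,1,0)$ from $\beta$, its closest-point projection to $\beta$ has diameter at most $C_0(\rho)$, whence $P$ is an $(L_0(\rho),A_0(\rho))$-quasi-geodesic. For \emph{strongly} contracting sets (bounded $\rho$) this is the standard bounded-geodesic-image property, but for the sublinear $\rho$ allowed here it is not a formal consequence of the contraction inequality, nor of Definition~\ref{Morse Constant} or Lemma~\ref{CM 4.6} (the latter locates the last $\kappa$-close point of a contracting ray to a quasi-geodesic from the same basepoint; it says nothing about projections of a departing leg). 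Iterating the contraction inequality in the natural way --- cutting the leg into pieces whose length equals their current distance to $\beta$ --- only bounds the projection by a sum of the form $\sum_j \rho(2^jk)$ over the dyadic distance scales the leg traverses, which grows (sublinearly) with $d(b,\beta)$ and is not a constant depending on $\rho$ alone. Excluding this drift uniformly in the triangle is precisely the content of uniform thinness of triangles with $\rho$-contracting sides, i.e.\ of the lemma you are proving, so the argument assumes what it must establish. The diameter paragraph has the same problem in amplified form: the ``pinning'' of $\pi_\beta(x)$ to within $C_1(\rho)$ of $\beta(s^{*})$ is asserted, not derived, and the sentence that Lemma~\ref{Keyprop1} forces the matching stretch into a ``bounded initial-fellow-travelling region determined by $\rho$'' is false as stated: two contracting geodesics issuing from a common vertex can fellow-travel for an arbitrarily long time, with the length depending on the triangle, not on $\rho$. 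Finally, the ideal-vertex reduction is waved off; the asymptotic clause of Lemma~\ref{Keyprop1} requires the two rays to start within $k(\rho,L,A)$ of each other, so replacing a finite common vertex by an ideal one (or a side by a bi-infinite geodesic) needs its own argument, e.g.\ that tails of $\rho$-contracting rays asymptotic to the same boundary point are uniformly Hausdorff-close. To repair the proof you would need to actually prove a uniform projection/concatenation lemma for sublinearly contracting geodesics (this is what \cite{Mousley}, and related arguments in \cite{cashen2017} and \cite{MCordes2017}, do over several careful steps), rather than cite it as standard.
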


A geodesic metric  space $X$ is said to be $\delta$-$hyperbolic$ if every geodesic triangle in $X$ has a $\delta$-$barycenter$. $X$ is said to be $hyperbolic$ if it is $\delta$-$hyperbolic$ for some $\delta\geq0$. Hyperbolicity of a geodesic metric space can be also defined in terms of uniform ‘thin’ triangles. We refer the reader to see  Proposition 1.17 of Chapter III.H in \cite{bridson} and   Section 6 in \cite{Bowditchnotes} for different equivalent notions of hyperbolicity.   A finitely generated group $G$ is said to be \textit{hyperbolic group}
   if its Cayley graph with respect to some finite generating set is a hyperbolic metric space.

   \section{Main result}
   Let $G$ be a finitely generated group $G$. Cashen-Mackay in \cite{cashen2017}
proved that the contracting boundary $\partial_{c}^{\mathcal{F}\mathcal{Q}}G$
   is a metrizable space.
   Let $H\leq G$ be a    finitely generated subgroup such that
$\Lambda(H)\subseteq\partial_{c}^{\mathcal{F}\mathcal{Q}}G$.
   Then the limit set $\Lambda(H)$ is also metrizable by equipping it with  subspace
topology inherited from $\partial_{c}^{\mathcal{F}\mathcal{Q}}G$.
   Let us assume that $\Lambda(H)$ contains three distinct points. Then the space
$\Theta_{3}(\Lambda(H))$ of distinct triples of $\Lambda(H)$
   is non-empty.

   \begin{theorem}\label{theorem}
   If $\Lambda(H)\subseteq \partial_{c}^{\mathcal{F}\mathcal{Q}}G$ is compact and
$\Lambda(H)$ contains at least three distinct points,
   then the action of the subgroup $H$ on $\Theta_{3}(\Lambda(H))$ is properly
discontinuous.
   \end{theorem}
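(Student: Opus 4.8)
The plan is to argue by contradiction, transporting everything from the boundary down to the Cayley graph via approximate barycenters. Work in the Cayley graph $X$ of $G$ for a finite generating set; this is a proper geodesic space on which $H\le G$ acts properly discontinuously by isometries, and $\Lambda(H)=\overline{Ho}\setminus Ho\subseteq\partial_{c}^{\mathcal{F}\mathcal{Q}}X$. Since $\partial_{c}^{\mathcal{F}\mathcal{Q}}X$ is metrizable, $\Theta_{3}(\Lambda(H))$ is an open, $\sigma$-compact subspace of the compact metrizable space $\Lambda(H)^{3}$. If the action were not properly discontinuous there would be a compact $Q\subseteq\Theta_{3}(\Lambda(H))$, pairwise distinct $h_{n}\in H$, and triples $T_{n}\in Q$ with $h_{n}T_{n}\in Q$; passing to a subsequence, $T_{n}\to T=(\zeta,\eta,\xi)$ and $h_{n}T_{n}\to T'=(\zeta',\eta',\xi')$ in $Q$, so $\zeta,\eta,\xi$ are pairwise distinct and $\zeta',\eta',\xi'$ are pairwise distinct. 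The goal is to produce a constant $C$ so that for all large $n$ some point of $\bar B(o,C)$ is carried by $h_{n}$ back into $\bar B(o,C)$; properness of the $H$-action on $X$ then leaves only finitely many possible $h_{n}$, the contradiction.

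For each $n$ I would form the geodesic triangle $\triangle(T_{n})$ with contracting sides, and likewise the limit triangles $\triangle(T)$ and $\triangle(T')$, whose sides are $\rho$- resp. $\rho'$-contracting for fixed sublinear functions $\rho,\rho'$. Fix a $\delta(\rho)$-barycenter $m$ of $\triangle(T)$ and a $\delta(\rho')$-barycenter $m'$ of $\triangle(T')$; both lie in a fixed ball $\bar B(o,D)$. The crucial step is to manufacture approximate barycenters $m_{n}$ of $\triangle(T_{n})$ whose quality and distance to $o$ are uniform in $n$. This is exactly what Corollary~\ref{MainLemma} (and, behind it, Theorem~\ref{Mtheorem}) is for: the endpoints of each side of $\triangle(T_{n})$ converge to a distinct pair among $\zeta,\eta,\xi$, so each side of $\triangle(T_{n})$ eventually passes within a constant $K=K(\rho)$ of any prescribed point of the corresponding side of $\triangle(T)$. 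Choosing points of the three sides of $\triangle(T)$ within $\delta(\rho)$ of $m$ and pulling them back, I obtain a point $m_{n}$ on a side of $\triangle(T_{n})$ with $d(o,m_{n})\le D+\delta(\rho)+K$ that is within a uniform $\delta_{1}=\delta_{1}(\rho)$ of all three sides of $\triangle(T_{n})$; these constants do not see the (possibly unbounded) contracting functions of the $\triangle(T_{n})$ themselves. The same recipe, applied to $\triangle(h_{n}T_{n})$ whose vertices converge to the distinct points $\zeta',\eta',\xi'$, yields a uniform-quality approximate barycenter $m_{n}'$ of $\triangle(h_{n}T_{n})$ in a fixed ball about $o$.

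Since $h_{n}$ acts by an isometry and carries $\triangle(T_{n})$ to $\triangle(h_{n}T_{n})$, the point $h_{n}m_{n}$ is again a uniform-quality approximate barycenter of $\triangle(h_{n}T_{n})$, so the argument reduces to a coarse stability statement: there is $B$ such that, for all large $n$, every $\delta_{2}$-barycenter of $\triangle(h_{n}T_{n})$ (with $\delta_{2}=\max\{\delta_{1},\delta_{1}'\}$) lies within $B$ of $m'$. Granting this, $d(h_{n}m_{n},m')\le B$ while $d(o,m_{n})$ is bounded, which finishes the argument as above. To prove the stability statement I would argue by contradiction: if $w_{n}$ were a $\delta_{2}$-barycenter of $\triangle(h_{n}T_{n})$ with $d(o,w_{n})\to\infty$, then, since each side of $\triangle(h_{n}T_{n})$ has nearest point to $o$ in a bounded set (Theorem~\ref{Mtheorem}) and is Hausdorff close on long central subsegments to the corresponding side of $\triangle(T')$ (Corollary~\ref{MainLemma}, with constants depending only on $\rho'$), the point $w_{n}$ would have to escape along one side of $\triangle(h_{n}T_{n})$ towards one of $\zeta',\eta',\xi'$ while remaining boundedly close to a side not incident to that endpoint. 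A contracting bi-infinite geodesic joining two boundary points diverges from every third boundary point, so this is impossible for large $n$; hence $\{d(o,w_{n})\}$ is bounded, and then passing to a limit on that bounded region shows that $w_{n}$ is, up to a fixed error, a $\delta_{2}$-barycenter of the fixed triangle $\triangle(T')$, which confines it near $m'$ by the barycenter lemma (Lemma~11 and Lemma~12 of \cite{Mousley}, whose diameter bound for a $\rho'$-contracting triangle extends to $\delta_{2}$-barycenters with a bound depending only on $\delta_{2}$ and $\rho'$).

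I expect the coarse stability statement of the last paragraph to be the main obstacle. The triangles $\triangle(h_{n}T_{n})$ may have arbitrarily bad contracting functions -- a compact subset of $\partial_{c}^{\mathcal{F}\mathcal{Q}}X$ need not be uniformly Morse -- so every uniform constant has to be harvested from the two fixed limit triangles $\triangle(T)$ and $\triangle(T')$ through Theorem~\ref{Mtheorem} and Corollary~\ref{MainLemma}, and the ``escape is impossible'' part must be made precise using the divergence of a contracting bi-infinite geodesic from a third boundary point, with Lemma~\ref{Convergence lemma} the natural device for turning ``$w_{n}$ runs off along a side'' into genuine convergence to a boundary point. Everything else -- the reduction to sequences, the barycenter bookkeeping, and the final appeal to properness of the $H$-action on $X$ -- is routine.
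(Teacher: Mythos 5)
Your proposal is correct and takes essentially the same approach as the paper: contradiction via converging triples, approximate barycenters of the limit ideal triangles transferred to the approximating triangles by Corollary~\ref{MainLemma}, exclusion of escape to infinity by combining Theorem~\ref{Mtheorem} and Lemma~\ref{Convergence lemma} with the distinctness of the limit points, and a final appeal to properness of the $H$-action on the Cayley graph. The only real difference is organizational: the paper simply shows the sequence $h_n(B_{(a,b,c)})$ is bounded (which already contradicts properness), whereas your stronger ``coarse stability'' statement confining all $\delta_2$-barycenters near $m'$ requires an extension of the Mousley diameter bound to arbitrary $\delta_2$ that is not needed for the argument.
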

   \begin{proof}
  Towards contradiction we assume that the action of the subgroup $H$ on $\Theta_{3}(\Lambda(H))$ is not properly
discontinuous. Then there exists a compact set $K\subseteq
\Theta_{3}(\Lambda(H))$  and a sequence $\{h_{n}\}$ of distinct elements of $H$ such that $h_{n}K\cap K\neq \emptyset$.
This implies that there exists sequence of points $\{(a_{n},b_{n},c_{n})\}$ and $\{(a'_{n},b'_{n},c'_{n})\}$  in $K$ such that
$h_{n}(a_{n},b_{n},c_{n})$$=$$(a'_{n},b'_{n},c'_{n})$. Since $\Theta_{3}(\Lambda(H))$ is metric space and $K\subseteq \Theta_{3}(\Lambda(H))$ is compact, by sequential compactness of $K$, $\{(a_{n},b_{n},c_{n})\}$ subsequentially converges to a point say $(a,b,c)$$\in K$. Also $\{(a'_{n},b'_{n},c'_{n})\}$
subsequentially converging to $(a',b',c')\in K$. So, after passing to a subsequence, we can assume $(a_{n},b_{n},c_{n})\rightarrow (a,b,c)$ and $(a'_{n},b'_{n},c'_{n})\rightarrow(a',b',c')$.\\
Let $\triangle{(a,b,c)}$, $\triangle{(a',b',c')}$ be ideal triangles corresponding to the points $(a,b,c)$ and $(a',b',c')$ respectively.
We can take the sides of $\triangle{(a,b,c)}$ and $\triangle{(a',b',c')}$  to be
uniformly contracting (take maximum of contracting functions of sides of the triangles).
So we have points say $B_{(a,b,c)}$
and $B_{(a',b',c')}$ in the Cayley graph of $G$ such that these points are $\delta$-approximate barycenters of $\triangle{(a,b,c)}$
and $\triangle{(a',b',c')}$ respectively for some $\delta\geq0$.
 The constant $\delta$ depends only on the contraction functions of sides of $\triangle{(a,b,c)}$ and $\triangle{(a',b',c')}$.
 Now consider geodesic triangles $\triangle_{n}$ and $\triangle'_{n}$
corresponding to points $(a_{n},b_{n},c_{n})$ and $(a'_{n},b'_{n},c'_{n})$
respectively.
 As $(a_{n},b_{n},c_{n})\rightarrow(a,b,c)$,  by Corollary \ref{MainLemma}, there
exists $M$ such that $B_{(a,b,c)}$ and $B'_{(a,b,c)}$ are $\delta+M$ barycenter for the triangles $\triangle_{n}$ and $\triangle'_{n}$ respectively. Let $\delta'=\delta+M$.\\
\textit{Claim:} The sequence $\{h_{n}(B_{(a,b,c)})\}$ lie in a bounded set.\\ \\
\textit{Proof of Claim:}\\
Let $x_{n},y_{n}$ and $z_{n}$ be respective points on the sides
$[a_{n},b_{n}]$,$[b_{n},c_{n}]$ and $[a_{n},c_{n}]$ of triangles $\triangle_{n}$ such that $d(B_{(a,b,c)},x_{n})\leq\delta'$,
$d(B_{(a,b,c)},y_{n})\leq\delta'$ and $d(B_{(a,b,c)},z_{n})\leq \delta'$. Suppose the sequence $\{h_{n}(B_{(a,b,c)})\}$ does not lie in
a bounded set. Then the sequence $\{h_{n}(x_{n})\}$ will also not lie in a bounded set.
  The point $h_{n}(x_{n})$ lies on a bi-infinite geodesic, say $\alpha_{n}$,
 joining points $a_{n}'$ and $b_{n}'$.  Also consider $p_n$ to be the nearest point
projection  of $o$ on $\alpha_{n}$.
 Take the path $\alpha_{n}'$ which is concatenation of any geodesic between $o$ to
$p_n$ and then the subsegment of $\alpha_{n}$ which contains $h_n(x_{n})$.
Each $\alpha_{n}'$ is a $(3,0)$ quasi-geodesic. By applying Lemma\ \ref{Convergence lemma}, passing to a subsequence if necessary, we get that
$h_n(x_{n})$ converges either to $a'$ or $b'$ in  $\partial_{c}^{\mathcal{F}\mathcal{Q}}G$.
Let us take $h_n(x_{n})\rightarrow$ $a'$. Similarly, after passing to a subsequence if necessary, we get that $h_n(y_{n})$
converges either to $b'$ or $c'$. But as $d(h_n(x_{n}),h_n(y_{n}))=d(x_n,y_n)\leq 2\delta'$  we must have either $a'= b'$ or $a'=c'$.
 If  $h_n(x_{n})\rightarrow b'$, after passing to a subsequence if necessary, we get that $h_n(z_{n})$
converges either to $a'$ or $c'$ and hence $b'=a'$ or $b'=c'$. This leads to a contradiction as we have assumed
$a'$, $b'$ and $c'$ to be distinct.
 Hence the claim.    \\

 Since the sequence $\{h_{n}\}$  was taken to be distinct, the above claim gives that
the sequence $\{h_{n}(B_{(a,b,c)})\}$  is bounded  and since the space (Cayley graph of
finitely generated group $G$)
 is proper this contradicts the fact that $H$ acts properly discontinuously on the
Cayley graph of $G$.\\
 Hence the subgroup $H$ acts properly discontinuously on $\Theta_{3}(\Lambda(H))$.
   \end{proof}

 Cashen-Mackay, in \cite{cashen2017}, proved that if a finitely generated group has compact contracting boundary then it is a hyperbolic group (See Theorem 10.1 of \cite{cashen2017}). In order to prove it,  Cashen-Mackay first proved that in a finitely generated group with compact contracting boundary, the geodesic rays are contracting and then they proved that the geodesic rays are uniformly contracting.
 The proof of geodesic rays uniformly contracting, given in \cite{cashen2017},  is hard and complicated. By assuming geodesic rays contracting in a finitely generated group with compact contracting boundary, we give a simpler way of proving the group to be hyperbolic.

\begin{theorem}\label{hyp theorem}
Let $G$ be a finitely generated group such that
$\partial_{c}^{\mathcal{F}\mathcal{Q}}G$ is compact. Then $G$ is a hyperbolic group.
\end{theorem}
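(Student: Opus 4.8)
The plan is to argue by contradiction, working in the Cayley graph $X$ of $G$ with basepoint $o$. By the theorem of Cashen--Mackay, since $\partial_{c}^{\mathcal{F}\mathcal{Q}}X$ is compact every geodesic ray in $X$ is contracting, and this is the only structural input I would use. Suppose $X$ is not hyperbolic. Using the characterisation of hyperbolicity by uniformly slim triangles (Proposition 1.17 of Chapter III.H in \cite{bridson}), I would produce geodesic triangles $\triangle_{n}=\triangle(x_{n},y_{n},z_{n})$ and points $m_{n}$ on the side $[x_{n},y_{n}]$ with $d(m_{n},[y_{n},z_{n}])>n$ and $d(m_{n},[x_{n},z_{n}])>n$. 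Since $G$ acts cocompactly on $X$, I would replace each $\triangle_{n}$ by a $G$-translate so that $d(o,m_{n})\leq 1$. Then $d(o,[x_{n},z_{n}])\to\infty$ and $d(o,[y_{n},z_{n}])\to\infty$, while $d(o,[x_{n},y_{n}])\leq 1$; in particular $d(o,x_{n})$, $d(o,y_{n})$, $d(o,z_{n})$ all tend to infinity.

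Next I would pass to limits. After a subsequence the vertices converge in the bordification $\overline{X}=X\cup\partial_{c}^{\mathcal{F}\mathcal{Q}}X$, say $x_{n}\to\xi$, $y_{n}\to\eta$, $z_{n}\to\mu$ with $\xi,\eta,\mu\in\partial_{c}^{\mathcal{F}\mathcal{Q}}X$; this step uses compactness of $\partial_{c}^{\mathcal{F}\mathcal{Q}}X$ together with properness of $X$ and the contraction of geodesic rays. I would then show $\xi\neq\eta$. Unwinding the definition of convergence in $\overline{X}$ and applying Lemma \ref{Keyprop1}, the geodesic $[o,x_{n}]$ lies within a bounded Hausdorff distance of a fixed geodesic representative $\alpha^{\xi}$ of $\xi$ along an initial segment whose length tends to infinity, and likewise $[o,y_{n}]$ along $\alpha^{\eta}$; so if $\xi=\eta$ then $[o,x_{n}]$ and $[o,y_{n}]$ fellow-travel each other along longer and longer initial segments, which forces the Gromov products $(x_{n}\mid y_{n})_{o}\to\infty$. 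But $m_{n}$ lies on $[x_{n},y_{n}]$ with $d(o,m_{n})\leq 1$, so $d(x_{n},y_{n})=d(x_{n},m_{n})+d(m_{n},y_{n})\geq d(o,x_{n})+d(o,y_{n})-2$, that is $(x_{n}\mid y_{n})_{o}\leq 1$ -- a contradiction. Hence $\xi\neq\eta$.

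Finally I would derive the contradiction. Since $\xi\neq\eta$, the point $\mu$ is distinct from at least one of them; say $\mu\neq\eta$ (the case $\mu\neq\xi$ is symmetric, with the side $[x_{n},z_{n}]$ in place of $[z_{n},y_{n}]$). Then the geodesics $[z_{n},y_{n}]$ have endpoints converging in the topology of fellow-travelling quasi-geodesics to the distinct points $\mu$ and $\eta$, so by Theorem \ref{Mtheorem} they all pass through a single bounded set; hence $d(o,[z_{n},y_{n}])$ is bounded, contradicting $d(o,[z_{n},y_{n}])>n-1\to\infty$. Therefore $X$, and so $G$, is hyperbolic.

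The step I expect to be the main obstacle is the claim that $\overline{X}$ is sequentially compact -- equivalently, that an unbounded sequence $\{w_{n}\}$ in $X$ has a subsequence converging in $\overline{X}$, necessarily to the class of the contracting ray obtained as an Arzela--Ascoli limit of $\{[o,w_{n}]\}$. The delicate point is that local uniform convergence $[o,w_{n}]\to\nu$ does not by itself keep $w_{n}$ near $\nu$, so one must exploit the Morse/contracting property of $\nu$, the description of the basic neighbourhoods $\widehat{U}(\zeta,r)$, and compactness of $\partial_{c}^{\mathcal{F}\mathcal{Q}}X$ to pin the convergence down. A cleaner route that isolates this difficulty is to first upgrade ``every geodesic ray is contracting'' to ``geodesic rays are uniformly contracting'': if the optimal contracting gauges of a sequence of geodesic rays $\gamma_{n}$ from $o$ were unbounded, then compactness of $\partial_{c}^{\mathcal{F}\mathcal{Q}}X$ gives $[\gamma_{n}]\to\zeta$, and the definition of the fellow-travelling topology together with Lemmas \ref{Keyprop1}, \ref{lemma1} and \ref{Clemma} would force, along a diagonal subsequence, every finite initial segment of $\gamma_{n}$ -- hence $\gamma_{n}$ itself -- to be $\rho''$-contracting for a single gauge $\rho''\asymp\rho_{\zeta}$, a contradiction. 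With uniform contraction of rays in hand the compactness of $\overline{X}$ and the convergence of the vertices become routine, and one may alternatively avoid the triangle argument entirely by feeding uniform contraction directly into the approximate-barycenter estimates.
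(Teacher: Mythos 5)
Your proposal is correct in outline and its skeleton matches the paper's: argue by contradiction, send the triangle vertices to points of $\partial_{c}^{\mathcal{F}\mathcal{Q}}G$ using (sequential) compactness of $C_S(G)\cup\partial_{c}^{\mathcal{F}\mathcal{Q}}G$, and invoke Theorem \ref{Mtheorem} when two vertex limits are distinct to bound the distance from the basepoint to the corresponding side. The differences are in how non-hyperbolicity is witnessed and how coincident limits are excluded. The paper defines hyperbolicity via approximate barycenters, takes triangles whose \emph{minimal} barycenter constants $\delta_n\to\infty$, normalizes so the barycenter is at $e$, splits into cases according to which vertex distances are unbounded, and, when two limits coincide, contradicts the minimality of $\delta_n$ by producing a better barycenter far out along $\alpha^{\zeta}$. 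You instead negate slimness, normalize at the bad point $m_n\in[x_n,y_n]$ (which forces all three vertices to infinity and removes the paper's case analysis), and rule out $\xi=\eta$ by comparing $(x_n\mid y_n)_o\le 1$ (from $m_n\in[x_n,y_n]$, $d(o,m_n)\le 1$) with $(x_n\mid y_n)_o\to\infty$, obtained from the neighborhoods $\widehat{U}(\xi,r)$ and Lemma \ref{Keyprop1} applied to $[o,x_n]$, $[o,y_n]$ and $\alpha^{\xi}$; that computation is sound (if $u\in[o,x_n]$, $v\in[o,y_n]$ satisfy $d(o,u),d(o,v)\ge t$ and $d(u,v)\le\epsilon$, then $(x_n\mid y_n)_o\ge t-\tfrac{\epsilon}{2}$), and your application of Theorem \ref{Mtheorem} to geodesics with endpoints in $X$ converging to distinct boundary points is exactly the paper's. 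Net effect: your route buys a cleaner case structure at the cost of a short fellow-travelling/Gromov-product argument in place of the minimality trick.

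On the step you flag as the main obstacle: the paper needs it too, and handles it exactly along the lines of your first suggestion — Arzel\`a--Ascoli produces a limiting ray $\gamma$, contracting by Theorem 10.1 of \cite{cashen2017}, and the paper then simply asserts $x_{n_k}\to[\gamma]$ in the bordification; so you are, if anything, more explicit than the paper about what remains to be checked (control of \emph{all} $(L,A)$-quasi-geodesics from $o$ to $x_{n_k}$ using the contraction of $\gamma$ and the description of $\widehat{U}(\zeta,r)$). I would, however, advise against your alternative detour through uniform contraction of geodesic rays, for two reasons. First, re-proving uniform contraction defeats the stated purpose of this proof, which is to obtain hyperbolicity from compactness of the boundary using only the fact that rays are contracting, precisely because the uniform statement is the hard part of Cashen--Mackay's argument. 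Second, as sketched the diagonal argument has a gap: convergence $[\gamma_n]\to\zeta$ together with Lemmas \ref{Keyprop1}, \ref{lemma1} and \ref{Clemma} shows that longer and longer \emph{initial segments} of $\gamma_n$ are $\rho''$-contracting for a fixed gauge $\rho''\asymp\rho_{\zeta}$, but for no fixed $n$ does this bound the contraction gauge of the entire ray $\gamma_n$, so the conclusion ``hence $\gamma_n$ itself'' does not follow as stated. Sticking to the direct verification that $x_{n_k}\to\zeta$ keeps your proof at the same level of input as the paper's.
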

\begin{proof}
Let $C_S(G)$ denote the Cayley graph of $G$ with respect to some finite generating set $S$. First note that the space
$ C_S(G)\cup \partial_{c}^{\mathcal{F}\mathcal{Q}}G$ is sequentially compact : let $\{x_n\}$ be a sequence of points in $G\cup C_S(G)$. If for infinitely many $n$'s,
$x_n\in \partial_{c}^{\mathcal{F}\mathcal{Q}}G$ then there exists a convergent subsequence of $\{x_n\}$, as $\partial_{c}^{\mathcal{F}\mathcal{Q}}G$ is compact metrizable space.
So, let us assume $\{x_n\}\in C_S(G)$ and $d(e,x_n)\to\infty$.  Let $\gamma_n$ be a geodesic in $C_S(G)$ joining $e$ to $x_n$.
Since $C_S(G)$ is proper, by Arzela-Ascoli theorem, $\{\gamma_n\}$ has a subsequence, $\{\gamma_{n_k}\}$, converging uniformly on compact sets to a geodesic ray, say $\gamma$, starting from $e$. The geodesic $\gamma$ is contracting (Theorem 10.1 of \cite{cashen2017}).
Let $\gamma$ represent the point $\zeta$ in the contracting boundary. Then $x_{n_k}\to \zeta$.

Suppose $G$ is not hyperbolic group then there exists
a sequence of unbounded positive numbers $(\delta_n)$ and a
sequence of triangles $\{\triangle(x_n,y_n,z_n)\}$ in $C_S(G)$ such that the triangle $\triangle(x_n,y_n,z_n)$ has a  $\delta_n$-barycenter in $G$ where  $\delta_n$ is minimal i.e. if
 $\delta'_n<\delta_n$ then the $\triangle(x_n,y_n,z_n)$ has no $\delta'_n$-barycenter. Let $q_n\in G$ be a $\delta_n$-barycenter
of the triangle $\triangle(x_n,y_n,z_n)$. After multiplication with $q_n^{-1}$ barycenters of $\triangle(x_n,y_n,z_n)$ will come at $e$, the identity of $G$. So, without loss of generality, we can assume  $e$ to be a $\delta_n$-barycenter of the triangle $\triangle(x_n,y_n,z_n)$. Let $\gamma^1_n$ be a geodesic joining $x_n$ and $y_n$ , $\gamma^2_n$ be a geodesic joining $y_n$ and $z_n$ and $\gamma^3_n$ be a geodesic joining $z_n$ and $x_n$.   Let $p^1_n$, $p^2_n$ and $p^3_n$ be a nearest point projection from $e$ onto sides $\gamma^1_n$, $\gamma^2_n$ and $\gamma^3_n$ respectively. Let $\alpha^{1}_{x_n}$$:=$$[e,p^1_n]*{\gamma^1_n}_{[p^1_n,x_n]}$, $\alpha^{1}_{y_n}$$:=$$[e,p^{1}_n]*{\gamma^{1}_n}_{[p^1_n,y_n]}$, similarly we define $\alpha^{2}_{y_n}$, $\alpha^{2}_{z_n}$, $\alpha^{3}_{z_n}$ and $\alpha^{3}_{x_n}$. Here “$*$” means concatenation of two paths. All these paths are continuous $(3,0)$ quasi-geodesic. Since $\delta_n\rightarrow \infty$, at least one of the sequence among $\{d(e,x_n)\}$, $\{d(e,y_n)\}$ and $\{d(e,z_n)\}$ is unbounded. If exactly one sequence is unbounded and rest are bounded then sequences $\{d(e,\gamma^1_n)\}$, $\{d(e,\gamma^2_n)\}$ and $\{d(e,\gamma^3_n)\}$ are bounded, contradiction to $\delta_n\rightarrow \infty$. \\
\textbf{Case 1:}  The sequences $\{d(e,y_n)\}$ and $\{d(e,z_n)\}$ are unbounded while $\{d(e,x_n)\}$ is bounded. The space $C_S(G)\cup\partial_c^{\mathcal{F}\mathcal{Q}}G$ is sequentially compact, hence the sequences $\{y_n\}$ and $\{z_n\}$ have subsequence converging to points in $\partial_{c}^{\mathcal{F}\mathcal{Q}}G$. Assume, without loss generality, that $y_n\rightarrow \zeta_1$ and $z_n\rightarrow \zeta_2$ in $\partial_c^{\mathcal{F}\mathcal{Q}}G$. If $\zeta_1\neq\zeta_2$ then by application of Theorem \ref{Mtheorem} gives that $\{d(e,\gamma^2_n)\}$ is bounded. Since $\{d(e,x_n)\}$ is bounded, we get that $\{d(e,\gamma^1_n)\}$, $\{d(e,\gamma^2_n)\}$, $\{d(e,\gamma^3_n)\}$ are bounded, which gives contradiction to $\delta_n\rightarrow \infty$.\par
Suppose $\zeta_1=\zeta_2=\zeta$. Given $r$, by definition of $x_n,y_n \rightarrow \zeta$, there exists $N=N(r)$ such that for $n\geq N$ \;
\begin{equation}
d\big(\alpha^{1}_{y_n},N^c_ro\cap \alpha^{\zeta}\big),d\big(\alpha^{2}_{y_n},N^c_ro\cap \alpha^{\zeta}\big),d\big(\alpha^{3}_{z_n},N^c_ro\cap \alpha^{\zeta}\big)\leq \kappa(\rho_{\zeta},3,0)
\end{equation}
There exists $r_i\geq r$ and $s_i>0$, where $i=1,2,3$, such that all the distances $d(\alpha^{\zeta}(r_1),\alpha^{1}_{y_n}(s_1)),
  d(\alpha^{\zeta}(r_2),\alpha^{2}_{y_n}(s_2)), d(\alpha^{\zeta}(r_3),\alpha^{3}_{z_n}(s_3))$ are at most $ \kappa(\rho_{\zeta},3,0)$.
  Let $r_m$$=$$\min\{r_1,r_2,r_3\}$. As $\alpha^{\zeta}$ is Morse, therefore there exists a positive number $\kappa_1=\kappa_1(\rho_{\zeta})$ and $t_i\leq s_i$
  such that the distances $d(\alpha^{\zeta}(r_m),\alpha^{1}_{y_n}(t_1)) $,  $d(\alpha^{\zeta}(r_m),\alpha^{2}_{y_n}(t_2))$, 
   $ d(\alpha^{\zeta}(r_m),\alpha^{3}_{z_n}(t_3))$ are at most $\kappa_1$.
  The lengths of $\alpha^{1}_{y_n}|_{[0,t_1]}$, $\alpha^{2}_{y_n}|_{[0,t_2]}$, $\alpha^{3}_{z_n}|_{[0,t_3]}$ tend to infinity
  as $r\to\infty$.
  By choosing $r$ large enough we have $d\big(\alpha^{\zeta}(r_m),\gamma^i_n\big)< \delta_n$ for all $n\geq N(r)$ and for all $i\in\{1,2,3\}$. This contradicts the minimality of $\delta_n$ for $n\geq N(r)$. Therefore $\zeta_1\neq\zeta_2$.\\

\textbf{Case 2:} Suppose all the three sequences $\{d(e,y_n)\}$, $\{d(e,z_n)\}$, $\{d(e,x_n)\}$ are unbounded. As in previous case, assume that $x_n\rightarrow \zeta_1$, $y_n\rightarrow \zeta_2$ and $z_n\rightarrow \zeta_3$ in $\partial_c^{\mathcal{F}\mathcal{Q}}G$. If $\zeta_1\neq \zeta_2\neq\zeta_3\neq \zeta_1$ then it will contradict, as in Case 1,  $\delta_n\rightarrow \infty$.\par
Suppose $\zeta_1=\zeta_2=\zeta$. Given $r\geq 1$, by definition of $x_n,y_n\rightarrow \zeta$, there exists $N=N(r)$ such that for $n\geq N$
\begin{equation}
d\big(\alpha^{1}_{x_n},N^c_ro\cap \alpha^{\zeta}\big),d\big(\alpha^{2}_{y_n},N^c_ro\cap \alpha^{\zeta}\big),d\big(\alpha^{3}_{y_n},N^c_ro\cap \alpha^{\zeta}\big)\leq \kappa(\rho_{\zeta},3,0)
\end{equation}
From the above equation, we get a large $r$ such that for all $n\geq N(r)$, the distance of $\alpha^{\zeta}(r)$ from all three sides of $\triangle_n$ is strictly less than $\delta_n$. This contradicts the minimality of $\delta_n$ for $n\geq N(r)$, hence $\zeta_1\neq\zeta_2$. Similarly one can show that $\zeta_2\neq\zeta_3$ and $\zeta_3\neq \zeta_1$.

\end{proof}

\end{document}